\documentclass[11pt]{scrartcl}

\usepackage{amssymb, amsmath, amsfonts, amsthm, graphics, mathrsfs, enumerate}
\usepackage{hyperref}
\hypersetup{
    colorlinks=true,
    linkcolor=blue,
    filecolor=magenta,      
    urlcolor=cyan,
}
\usepackage[hmargin=1 in, vmargin = 1 in]{geometry}
\usepackage{graphicx}
\usepackage{caption}
\usepackage{color}
%\usepackage{ulem}
%\usepackage{fontawesome,bbm}
%\usepackage{fancyvrb}
%\usepackage{etaremune}

% Bobbie's macro
\renewcommand{\d}{\mathrm{d}}
\newcommand{\sump}{\sideset{}'\sum}

\newcommand{\CC}{\mathbb{C}}
\newcommand{\RR}{\mathbb{R}}
\newcommand{\ZZ}{\mathbb{Z}}
\newcommand{\cD}{\mathcal{D}}
\newcommand{\cG}{\mathcal{G}}
\newcommand{\cH}{\mathcal{H}}
\newcommand{\cS}{\mathcal{S}}
\newcommand{\cW}{\mathcal{W}}

\newcommand{\pd}{\partial}
\newcommand{\xx}{\mathbf{x}}
\newcommand{\yy}{\mathbf{y}}

\newcommand{\uu}{\mathbf{u}}

\newcommand{\nn}{\mathbf{n}}
\newcommand{\rr}{\mathbf{r}}

\newcommand{\ii}{\mathbf{i}}
\newcommand{\bs}{\boldsymbol}

\newcommand{\tQ}{\tilde Q}

\newcommand{\tH}{\tilde H}

% Theorems
\newtheorem{theorem}{Theorem}
\newtheorem{corollary}[theorem]{Corollary}

\newtheorem{remark}{Remark}

%\graphicspath{{fig/}}

\title{A Unified Trapezoidal Quadrature Method for Singular and Hypersingular Boundary Integral Operators on Curved Surfaces}
\author{Bowei Wu, Per-Gunnar Martinsson}
%\date{}

\begin{document}

\maketitle

\paragraph{Abstract}
This paper describes a trapezoidal quadrature method for the discretization of singular and hypersingular boundary integral operators (BIOs) that arise in solving boundary value problems for elliptic partial differential equations. 
The quadrature is based on a uniform grid in parameter space coupled with the standard punctured Trapezoidal rule.
A key observation is that the error incurred by the singularity in the kernel can be expressed exactly using generalized Euler-Maclaurin formulae that involve the Riemann zeta function in 2D and the Epstein zeta functions in 3D. 
These expansions are exploited to correct the errors via local stencils at the singular point using a novel systematic moment-fitting approach.
This new method provides a unified treatment of all common BIOs (Laplace, Helmholtz, Stokes, etc.). We present numerical examples that show convergence of up to $32^\text{nd}$-order in 2D and $9^\text{th}$-order in 3D with respect to the mesh size.

\section{Introduction}
Singular integration is a fundamental problem that appears everywhere from elementary calculus to advanced scientific applications. In the integral equation setting, a boundary integral operator typically contains components that take one of the following forms
\begin{equation}
    \int_\Gamma \log|\xx-\yy|\,\varphi(\yy)\d s_\yy \qquad\qquad \int_\Gamma \frac{\varphi(\yy)}{|\xx-\yy|^\alpha}\d s_\yy
    \label{eq:singular_integrals}
\end{equation}
where $\varphi$ is a smooth function, $\Gamma$ is a curve in $\RR^2$ or a surface in $\RR^3$, and $\d s_\yy$ is the surface element at $\yy\in\Gamma$. Both integrals in \eqref{eq:singular_integrals} are singular when the target point $\xx$ lives on the surface $\Gamma$, whereas the second integral becomes \emph{hypersingular} if $\alpha$ is greater than the dimension of $\Gamma$.

Fast integral equation solvers are critical in a variety of applications, such as radar, medical imaging, multilayered media scattering, and fluid structure shape optimizations,   \cite{bonnet2020shape, gopal2022accelerated,greengard2009fast,zhang2021fast}. Surface singular integrals are the key to developing fast solvers for these applications in at least three important ways: 1) The accuracy of the overall solution can only be as good as that of the discretization of the surface integrals. With a high-order accurate quadrature method one can achieve any practical accuracy with much less effort than using a low-order method. 2) State-of-the-art fast algorithms for integral equations (e.g., Fast Multipole Methods and Fast Direct Solvers) exploit the global hierarchical rank-structure of the integral operators, so a quadrature method that preserves such rank-structure is highly preferred. A typically good choice is smooth quadratures with local corrections \cite{greengard2021_fmm_local}. 3) In order for an integral equation based approach to be robust, the quadrature used must be numerically stable so that the discrete system of equations is not substantially worse conditioned than the original integral equation.

We focus on locally-corrected trapezoidal quadratures, which are highly effective for the singular integrals \eqref{eq:singular_integrals} when $\Gamma$ can be 
accurately discretized using a uniform grid on a rectangle with periodic boundary conditions.
The past decades have seen many successful trapezoidal quadratures designed for integral equation problems in $\RR^2$, such as \cite{kapur1997high,alpert1999hybrid} for logarithmically or algebraically singular line integrals, \cite{sidi2013compact} for hypersingular integrals, and \cite{duan2009high} for 2D volume integrals with the Helmholtz kernel. In contrast, effective trapezoidal quadratures for integral equation problems in $\RR^3$ have been lacking. There are accurate double- and triple-trapezoidal quadratures for the Coulomb potential, such as \cite{aguilar2005high,marin2014corrected}, but they do not account for surface integrals.

All existing locally-corrected trapezoidal quadratures have been derived based on the fact that the trapezoidal rule, when applied to approximating singular integrals, has an asymptotic error expansion whose coefficients can be expressed using the Riemann zeta function or its derivative; this fact was first discovered as generalized Euler-Maclaurin formulae in \cite{navot1961extension,navot1962further}, see \cite{sidi2018recent} for a most general formula. 

The current work is evolution of our prior papers 
\cite{wu2021zeta,wu2021corrected}.
In \cite{wu2021zeta} we generalized the Euler-Maclaurin formula to handle line integrals by including the geometric information into the error expansion, which resulted in simple trapezoidal quadratures (called ``zeta quadrature'') for logarithmically singular kernels that are stable for arbitrarily high-order corrections. 
We generalized the zeta quadrature to surface integrals in \cite{wu2021corrected}, where we demonstrated that the double-trapezoidal rule applied to singular integrals also has an asymptotic error expansion in terms of zeta functions, called \emph{Epstein zeta functions} \cite{epstein1903theorie,epstein1906theorie}; this error expansion captures the local surface distortion by taking into account the derivatives of the geometric parameterization. This discovery allowed us to derive zeta quadratures that are $5^\text{th}$ order accurate for the Laplace and Helmholtz kernels. However, our approach in \cite{wu2021corrected} requires higher-order derivatives of the geometry for higher-order corrections, which makes a quadrature of $7^\text{th}$ order or higher impractical.

In this paper, we introduce a new approach to derive zeta quadratures for surface integrals. This new approach treats kernels with different singularities in a unified manner (including hypersingular kernels), does \emph{not} require higher derivatives of the geometry other than the first fundamental forms, and calculates the correction weights systematically using a moment-fitting approach. As a result, the derivation of higher-order quadrature formulae is significantly simplified and the method can be easily applied to different kernels. As a trade-off of using only the first derivatives of the geometry, the correction stencils for higher-order quadratures are generally bigger than the corresponding stencils in \cite{wu2021corrected}.

This paper is organized as follows. First, in Section \ref{sc:zeta2d} we introduce the zeta quadrature for line integrals, which is based on the 1D generalized Euler-Maclaurin formulae. Then in Section \ref{sc:zeta3d} we introduce a 2D generalized Euler-Maclaurin formula and develop the new zeta quadrature that can handle a variety of surface integral operators with a unified approach. Numerical results are presented in both sections. Finally, we conclude in Section \ref{sc:conclusion}.

%ssssssssssssssssssssssssss
\section{Zeta quadrature for line integrals in $\RR^2$}
\label{sc:zeta2d}

In this section, we introduce the zeta quadrature for line integrals in $\RR^2$, which serves as a motivation for the unified quadrature method for surface integrals in $\RR^3$ in Section \ref{sc:zeta3d}. We first review Sidi's results on the generalized Euler-Maclaurin formula \cite{sidi2012algebraic,sidi2018recent} and hypersingular trapezoidal quadrature \cite{sidi2013compact}, and derive the quadrature formulae for the Laplace and Helmholtz hypersingular integral operators on a contour. In particular we show that in certain circumstances, an algebraically high-order quadrature rule can attain any practically relevant accuracy using fewer degrees of freedom than a spectrally accurate rule (e.g., see Fig.\ref{fig:hyper2d_lap_conv}). 

We start by stating the following generalized Euler-Maclaurin formula from \cite[Theorem 2.3]{sidi2012algebraic}, which is the theoretical basis for the treatment of practically all boundary integral operators for elliptic PDEs.

\begin{theorem}\label{thm:EM1d}
Let $f\in C^\infty(a,b)$ and assume that $f(x)$ has the asymptotic expansions
\begin{equation}
\begin{aligned}
f(x) &\sim K(x-a)^{-1} + \sum_{k=0}^\infty c_k(x-a)^{\alpha_k} & &\text{as }x\to a^+\\
f(x) &\sim L(b-x)^{-1} + \sum_{k=0}^\infty d_k(b-x)^{\beta_k} & &\text{as }x\to b^-
\end{aligned}
\end{equation}
where $K, L$, and $c_k, d_k$ are constants, and where $\alpha_k$ and $\beta_k$ are distinct complex numbers that are different from $-1$ and satisfy
\begin{equation}
\begin{aligned}
\mathrm{Re}\,\alpha_0\leq\mathrm{Re}\,\alpha_1\leq\mathrm{Re}\,\alpha_2\leq\dots;\quad \lim_{k\to\infty}\mathrm{Re}\,\alpha_k=\infty,\\
\mathrm{Re}\,\beta_0\leq\mathrm{Re}\,\beta_1\leq\mathrm{Re}\,\beta_2\leq\dots;\quad \lim_{k\to\infty}\mathrm{Re}\,\beta_k=\infty.
\end{aligned}
\end{equation}
Let $h = (b-a)/n$ for $n \in\mathbb{Z}^+$. Then, as $n\to\infty$ (or $h\to0$),
\begin{equation}
\begin{aligned}
h\sum_{j=1}^{n-1}f(a+jh) \sim \int_a^b f(x)\,\d x & + K(\gamma-\log h) + \sum_{k=0}^\infty c_k\zeta(-\alpha_k)h^{\alpha_k+1}\\
& + L(\gamma-\log h) + \sum_{k=0}^\infty d_k\zeta(-\beta_k)h^{\beta_k+1},
\end{aligned}
\label{eq:extendedEM}
\end{equation}
where $\gamma$ is Euler-Mascheroni constant, $\zeta(s)$ is the Riemann zeta function, and the integral exists in the \emph{Hadamard finite-part} sense.
\end{theorem}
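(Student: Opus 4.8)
The plan is to prove \eqref{eq:extendedEM} by a three-step reduction: localize to each endpoint, subtract a finite initial segment of the singular expansion so that the remainder is smooth enough for the classical Euler--Maclaurin formula, and finally resolve the remaining model terms against the Riemann zeta function. Throughout, write $T_h[g]:=h\sum_{j=1}^{n-1}g(a+jh)$ for the punctured trapezoidal functional, which is linear in $g$, as is the finite-part integral.

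First I localize. Fix a smooth cutoff $\chi$ with $\chi\equiv1$ on a neighbourhood of $a$ and $\chi\equiv0$ on a neighbourhood of $b$, and split $f=\chi f+(1-\chi)f$. It then suffices to treat $\chi f$, which is $C^\infty$ on $(a,b]$, vanishes identically near $b$, and carries only the $a$-endpoint expansion; the term $(1-\chi)f$ is handled by the mirror argument and supplies the $L(\gamma-\log h)$ term together with the $d_k$-series. Next, for a large parameter $M$, subtract the truncated model $g_M(x):=\chi(x)\big[K(x-a)^{-1}+\sum_{\mathrm{Re}\,\alpha_k\le M}c_k(x-a)^{\alpha_k}\big]$. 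Then $\chi f-g_M$ vanishes near $b$ and, together with its derivatives up to order $\sim M$, vanishes at $a$, so the classical Euler--Maclaurin formula applies with all boundary terms zero and gives $T_h[\chi f-g_M]=\int_a^b(\chi f-g_M)\,\d x+O(h^M)$. What remains is to evaluate $T_h$ on each model term $\chi(x)(x-a)^{\alpha_k}$ and on $\chi(x)(x-a)^{-1}$, modulo $O(h^M)$.

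The heart of the argument is this model computation. With $x=a+jh$ one has $T_h[\chi(\cdot)(\cdot-a)^s]=h^{s+1}\sum_{j\ge1}\chi(a+jh)j^s$; since $(1-\chi)(x-a)^s$ is smooth away from $a$, its trapezoidal error is an ordinary power series in $h$ with coefficients built from derivatives at $b$, which assembles into the classical part of the expansion, so up to such terms the sum is the pure power sum $h^{s+1}\sum_{j=1}^{n-1}j^s$. The classical Euler--Maclaurin expansion of the partial sums of $j^s$ — equivalently, the analytic continuation of $\zeta$ — gives $\sum_{j=1}^{n-1}j^s=\zeta(-s)+\tfrac{n^{s+1}}{s+1}+\cdots$, so multiplying by $h^{s+1}$ turns $\tfrac{n^{s+1}}{s+1}$ into $\int_a^b(x-a)^s\,\d x$ and leaves precisely $\zeta(-s)h^{s+1}$ as the anomalous term. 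Established first for $\mathrm{Re}\,s$ large, this extends by analytic continuation to all admissible $s\neq-1$, the Hadamard finite part giving meaning to the model integral when $\mathrm{Re}\,s\le-1$. (Alternatively, Poisson summation gives $h\sum_jg(jh)=\int g+\sum_{k\neq0}\widehat g(k/h)$, and the singularity $(x-a)^s$ contributes a Fourier tail $\widehat g(\xi)\sim\Gamma(s+1)(2\pi i\xi)^{-s-1}$ whose sampling over $k/h$ sums to $2\Gamma(s+1)(2\pi)^{-s-1}\cos\!\big(\tfrac{\pi(s+1)}{2}\big)\zeta(s+1)\,h^{s+1}=\zeta(-s)h^{s+1}$ by the functional equation.) For the residue term $s=-1$ the same scheme uses $\sum_{j=1}^{n-1}j^{-1}=\log n+\gamma+O(1/n)$ and $n=(b-a)/h$, producing $K(\gamma-\log h)$ once matched against the finite part of $\int_a^b(x-a)^{-1}\,\d x$. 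Adding the $a$- and $b$-side contributions yields \eqref{eq:extendedEM}.

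The main obstacle is this last step, where two points need care. First, one must check that the cutoff $\chi$ and its smooth tail inject only ordinary integer powers of $h$ that recombine with the integral, rather than perturbing the $h^{\alpha_k+1}$ coefficients; this is why $\chi$ should be a neutralizer and why it helps that every quantity in sight depends analytically on the exponent. Second, one must handle the full range of complex exponents $\alpha_k,\beta_k$ uniformly — especially those with $\mathrm{Re}\,\alpha_k\le-1$, where both $\sum_jj^s$ and the model integral diverge — by proving the identities for $\mathrm{Re}\,s$ large and continuing, all the while verifying that the finite-part regularization of the integral stays compatible with the $\zeta$-regularization of the divergent sum.
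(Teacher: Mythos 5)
The paper does not prove this statement at all: Theorem \ref{thm:EM1d} is quoted verbatim from Sidi (cited as \cite[Theorem 2.3]{sidi2012algebraic}), so there is no in-paper argument to compare against. Your sketch is, in essence, a reconstruction of the classical Navot/Sidi route: localize with a neutralizer, subtract a finite singular model, apply classical Euler--Maclaurin to the smooth remainder, and evaluate the model terms via the power-sum asymptotics $\sum_{j=1}^{n-1}j^{s}\sim\zeta(-s)+\tfrac{n^{s+1}}{s+1}-\tfrac{n^{s}}{2}+\cdots$ (equivalently the large-$n$ expansion of the Hurwitz zeta), with $s=-1$ handled by $\sum_{j<n}j^{-1}=\log n+\gamma+O(1/n)$ and the Hadamard finite part absorbing $\log(b-a)$. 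That skeleton is sound, and your model computation is correct: when you split $\chi(x)(x-a)^{s}=(x-a)^{s}-(1-\chi)(x)(x-a)^{s}$, the $b$-endpoint Euler--Maclaurin terms of the smooth piece cancel \emph{exactly} against the $n^{s}$, $n^{s-2k+1}$ terms of the power-sum expansion, leaving precisely $\mathrm{f.p.}\int_a^b\chi(x)(x-a)^{s}\,\d x+\zeta(-s)h^{s+1}$; your phrasing (``assembles into the classical part'') is vaguer than this, but the mechanism you invoke is the right one.

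Two points need tightening. First, the step ``$\chi f-g_M$ vanishes, together with its derivatives up to order $\sim M$, at $a$'' does not follow from the stated hypothesis alone: an asymptotic expansion of $f$ as $x\to a^{+}$ controls the remainder but not its derivatives, and the classical Euler--Maclaurin formula applied to $\chi f-g_M$ needs derivative bounds near $a$ to make the boundary terms and the Bernoulli-remainder term $O(h^{M})$. You must either assume (as Sidi effectively does) that the expansions may be differentiated termwise, or work with a representation such as $f(x)=\sum_k(x-a)^{\alpha_k}g_k(x)$ with $g_k$ smooth, which yields the differentiated expansions automatically; as written this is a genuine gap in the reduction, albeit one that reflects an implicit hypothesis of the quoted theorem rather than a flaw in your strategy. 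Second, ``established for $\mathrm{Re}\,s$ large and extended by analytic continuation'' is too glib for an \emph{asymptotic} expansion (asymptotic series do not continue analytically term by term); the clean fix is to avoid continuation altogether by writing $\sum_{j=1}^{n-1}j^{s}=\zeta(-s)-\zeta(-s,n)$ and using the uniform large-argument expansion of the Hurwitz zeta, or by applying Euler--Maclaurin directly to $x^{s}$ on $[1,n]$, both valid for every $s\neq-1$; your Poisson-summation alternative with the functional equation is also a legitimate route and the constant you quote does reduce to $\zeta(-s)$.
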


%rrrrrrrrrrrrrrrr
\begin{remark}
Notice that when $f(x)$ is a regular function, then $K=L=0$, $\alpha_k = \beta_k= k$, $c_k = \tfrac{f^{(k)}(a)}{k!}$, and $d_k = \tfrac{(-1)^{k}f^{(k)}(b)}{k!}$, so Theorem 1 reduces to the classical Euler-Maclaurin formula
\begin{equation}
h\sum_{j=1}^{n-1}f(a+jh) \sim \int_a^b f(x)\,\d x -\frac{h}{2}[f(a)+f(b)] +  \sum_{k=1}^\infty \frac{B_{2k}}{(2k)!}[f^{(2k-1)}(b)-f^{(2k-1)}(a)]h^{2k},
\end{equation}
where $B_k$ are the Bernoulli numbers. Here we have used the fact that $\zeta(-k) = (-1)^k \frac{B_{k+1}}{k+1}$ for $k\geq0$, under the convention $B_1 = -1/2$.
\end{remark}

\begin{remark}
\cite{sidi2012algebraic_log} proposed a more general E-M formula that unifies the treatment of algebraic and logarithmic singularities. We focus on algebraic singularities in this paper for conciseness; for the treatment of logarithmically singular kernels, see \cite{wu2021zeta}.
\end{remark}

Next, we consider a function $f$ that is periodic on the interval $[-a,a)$, and infinitely differentiable everywhere except at $0$. We further assume that 
\begin{equation}
f(x) \sim \frac{\varphi(x)}{x^2}\quad \text{as} \quad x\to0,
\end{equation}
where $\varphi(x)$ is analytic near $0$.
Denote the finite-part integral
\begin{equation}
I[f] := \int_{-a}^af(x)\,\d x
\end{equation}
Let $h = a/n$ and denote the punctured periodic trapezoidal rule
\begin{equation}
T_n[f] := h\sum_{\substack{i=-n\\i\neq0}}^{n-1} f(ih).
\end{equation}

\begin{corollary}[Sidi 2013]
Suppose $f(x)$ is a periodic hypersingular function as defined above, then
\begin{equation}
I[f] = T_n[f]  - \frac{\pi^2}{3h}\varphi(0) + \frac{\varphi''(0)h}{2} + O(h^\mu)
\label{eq:zetatrap_x2}
\end{equation}
for any $\mu>0$.
\end{corollary}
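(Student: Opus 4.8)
The plan is to reduce the corollary to Theorem~\ref{thm:EM1d} by splitting off the singular part of $f$ and treating the two contributions separately. First I would write $\varphi(x) = \varphi(0) + \varphi'(0)x + \tfrac12\varphi''(0)x^2 + x^3\psi(x)$ with $\psi$ analytic near $0$, so that near $x=0$
\begin{equation}
f(x) \sim \frac{\varphi(0)}{x^2} + \frac{\varphi'(0)}{x} + \frac{\varphi''(0)}{2} + x\psi(x) + \cdots .
\end{equation}
Because $f$ is periodic on $[-a,a)$, the same type of expansion holds as $x\to 0^+$ and as $x\to 0^-$; to apply Theorem~\ref{thm:EM1d} on each of the intervals $(-a,0)$ and $(0,a)$ I would identify the endpoint data. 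On $(0,a)$, expanding about the left endpoint $0$ gives $K = \varphi'(0)$ (the coefficient of $x^{-1}$) but \emph{not} the $x^{-2}$ term, which is outside the scope of Theorem~\ref{thm:EM1d} as stated; so the genuinely new ingredient is the contribution of the pure $x^{-2}$ singularity, which I would handle by a direct computation using the Hurwitz/Riemann zeta function.

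The key steps, in order: (1) Treat $g(x) := \varphi(0)/x^2$ separately. Its punctured trapezoidal sum over the symmetric periodic grid is $h\sum_{i\neq 0}\varphi(0)/(ih)^2 = \tfrac{\varphi(0)}{h}\sum_{i\neq 0} i^{-2} = \tfrac{\varphi(0)}{h}\cdot 2\zeta(2) = \tfrac{\pi^2}{3h}\varphi(0)$; meanwhile the finite-part integral $\int_{-a}^a \varphi(0)x^{-2}\,\d x$ is $-2\varphi(0)/a + (\text{correction from periodization})$, and in fact for the periodic setting the Hadamard finite part of the periodized $x^{-2}$ singularity vanishes up to lower-order terms — this is exactly the statement that accounts for the $-\tfrac{\pi^2}{3h}\varphi(0)$ term in \eqref{eq:zetatrap_x2}. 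The cleanest route is to observe that the periodic function $x \mapsto \pi^2/a^2 \cdot 1/\sin^2(\pi x/a)$ has $1/x^2$ as its leading singularity, its finite-part integral over a period is $0$, and its punctured trapezoidal sum is known in closed form; subtracting it reduces $g$ to a bounded periodic function to which the classical Euler-Maclaurin formula (the first Remark) applies, contributing only $O(h^\mu)$. (2) For the remainder $f(x) - \varphi(0)/x^2$, which has at worst a $1/x$ singularity at $0$, apply Theorem~\ref{thm:EM1d} on $(0,a)$ and on $(-a,0)$ and add the results. The two logarithmic terms $K(\gamma-\log h)$ and $L(\gamma-\log h)$ cancel because $K = \varphi'(0)$ from the right and $L = -\varphi'(0)$ from the left (the $1/x$ singularity is odd), and periodicity makes the endpoint contributions at $\pm a$ cancel in pairs. (3) Collect terms: the $k=0$ algebraic term from the constant piece $\varphi''(0)/2$ contributes $\tfrac{\varphi''(0)}{2}\zeta(0)h = -\tfrac{\varphi''(0)}{4}h$ from each side — here I need to be careful, and in fact the correct accounting is that the $x^{+1}$-type terms produce $\zeta(-1) = -1/12$ factors; reconciling the precise coefficient $\varphi''(0)h/2$ in \eqref{eq:zetatrap_x2} against the zeta-values is a bookkeeping step I would carry out explicitly, matching against the known Sidi result \cite{sidi2013compact}.

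The main obstacle is step (1): Theorem~\ref{thm:EM1d} as quoted only covers singularities of the form $K(x-a)^{-1}$ plus algebraically-decaying-to-the-endpoint terms, so the $x^{-2}$ singularity must be dealt with by hand. The honest way to do this is to note that for the model periodic hypersingular function one has the exact identity $h\sum_{i\neq 0}(ih)^{-2} = \tfrac{1}{h}\cdot 2\zeta(2)$ while the Hadamard finite part of $\int_{-a}^a x^{-2}\,\d x$ over the period, with the periodic (rather than compactly-supported) regularization, equals $-2\zeta(2)/a \cdot (\text{something})$; keeping the algebra straight between the "finite-part of $\int x^{-2}$" convention and the "sum of $i^{-2}$" requires care, and this is where the factor $\pi^2/3 = 2\zeta(2)$ enters. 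Everything else — the cancellation of log terms, the vanishing of boundary contributions by periodicity, and the identification of the $\varphi''(0)h/2$ term — then follows routinely from Theorem~\ref{thm:EM1d} and the classical Euler-Maclaurin expansion applied to the regular remainder.
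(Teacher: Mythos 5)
Your starting premise misreads Theorem~\ref{thm:EM1d}: the theorem excludes only the exponent $-1$ (that case is carried by the $K(x-a)^{-1}$ term), while arbitrary $\alpha_k\neq-1$ with $\mathrm{Re}\,\alpha_k\to\infty$ are allowed, so $\alpha_0=-2$ is squarely within its scope, with the integral read in the Hadamard finite-part sense. This is exactly what the paper does: it applies Theorem~\ref{thm:EM1d} to $f=\varphi(x)/x^2$ itself on $(-a,0)$ and $(0,a)$, lets the $\varphi'(0)(\gamma-\log h)$ terms and all odd-$k$ algebraic terms cancel by parity, lets the endpoint terms cancel by periodicity, and obtains $T_n[f]\sim I[f]+\sum_{k\geq0}\tfrac{\varphi^{(2k)}(0)}{(2k)!}\,2\zeta(2-2k)h^{2k-1}$; the statement \eqref{eq:zetatrap_x2} then follows from $\zeta(2)=\pi^2/6$, $\zeta(0)=-1/2$, and crucially $\zeta(-2k)=0$ for $k\geq1$. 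Your split-off-the-$x^{-2}$-and-periodize detour is therefore unnecessary, though not in itself fatal.

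The genuine gaps are in what you leave unverified. First, the coefficient of the $h$ term is never established: you waver between a $\zeta(0)$ accounting and a $\zeta(-1)$ accounting and then defer to ``matching against the known Sidi result,'' which is circular for a proof of that result; moreover, if you subtract a periodized kernel such as $\tfrac{\pi^2}{L^2}\csc^2(\pi x/L)$, its own expansion $\tfrac1{x^2}+\tfrac{\pi^2}{3L^2}+O(x^2)$ shifts the constant term of the remainder, and you do not track that shift (nor do you verify the claimed vanishing of the finite-part integral of the periodized kernel, or match the period, $2a$ versus $a$). Second, and more importantly, the corollary asserts an error $O(h^\mu)$ for \emph{every} $\mu>0$, but your argument cannot deliver this as written: the remainder after removing $\varphi(0)/x^2$ still has a full algebraic expansion at $0$ (it is not a smooth periodic function to which classical Euler--Maclaurin applies), so the two-sided application of Theorem~\ref{thm:EM1d} produces infinitely many further terms $h^{k-1}$, and superalgebraic accuracy only follows after observing that the odd-$k$ contributions cancel by parity and the surviving even ones carry the trivial zeros $\zeta(-2k)=0$, $k\geq1$. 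That zeta-vanishing argument is the crux of the corollary and appears nowhere in your proposal.
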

\begin{proof}
By applying Theorem \ref{thm:EM1d} to $f(x)$ separately on $(-a,0)$ and $(0,a)$, then combining them, we obtain
\begin{equation}
\begin{aligned}
T_n[f] &\sim  I[f]  + (\varphi'(0)-\varphi'(0))\cdot(\gamma -\log h)\\
&\quad + \sum_{\substack{k=0\\k\neq1}}^\infty\Big(\frac{\varphi^{(k)}(0)}{k!} + (-1)^k\frac{\varphi^{(k)}(0)}{k!}\Big) \zeta(2-k)h^{k-1}\\
&\quad -\frac{h}{2}[f(-a)+f(a)] +  \sum_{k=1}^\infty \frac{B_{2k}}{(2k)!}[f^{(2k-1)}(a)-f^{(2k-1)}(-a)]h^{2k}\\
&\sim I[f]  + \sum_{k=0}^\infty\frac{\varphi^{(2k)}(0)}{(2k)!}\,2\zeta(2-2k)h^{2k-1}
\end{aligned}
\label{eq:thm1_deriv}
\end{equation}
where all the boundary terms involving $f^{(k)}(\pm a)$ vanish because of periodicity, then the desired trapezoidal rule \eqref{eq:zetatrap_x2} is obtained by substituting $\zeta(2) = \pi^2/6$, $\zeta(0) = -1/2$ and $\zeta(-2k)=0$ for all $k>0$. Clearly this approximation converges faster than any algebraic order of $h$. 
\end{proof}

\begin{remark}
In general, when $f(x)$ is not periodic, one can keep the boundary terms in \eqref{eq:thm1_deriv} to obtain a high-order correction.
\end{remark}

When the formula for the second derivative $\varphi''(0)$ in \eqref{eq:zetatrap_x2} is not available, which is generally the case in practice, we use a high-order central difference approximation
$$
\varphi''(0) = \frac{1}{h^2}\sum_{j=-M}^Mc_j\varphi(jh)+ O(h^{2M}),
$$
where the coefficients $c_j, j = -M,\dots,M$, can be calculated by solving the system 
\begin{equation}
c_m \equiv c_{-m}\quad \text{and}\quad  \sum_{j=-M}^Mc_j j^{2m} = 2\delta_{1m},\quad \text{for } m = 0,\dots,M.
\end{equation} This yields the following high-order quadrature
\begin{equation}
I[f] = T_n[f]  - \frac{\pi^2}{3h}\varphi(0) + \frac{1}{2h}\sum_{j=-M}^Mc_j\varphi(jh)+ O(h^{2M+1}).
\label{eq:zetatrap_x2_cd}
\end{equation}
Alternatively, one can eliminate the $\varphi''(0)$ term in \eqref{eq:zetatrap_x2} via Richardson extrapolation \cite{sidi2013compact}, resulting in a quadrature on an alternating grid
\begin{equation}
I[f] = \tilde{T}_n[f]  - \frac{\pi^2}{2h}\varphi(0) + O(h^\mu),\qquad \forall \mu>0.
\label{eq:zetatrap_x2_alt}
\end{equation}
where
$$
\tilde{T}_n[f] = 2h\sum_{\substack{i=-n\\i \text{ odd}}}^{n-1} f(ih).
$$
We next present results of applying the quadrature \eqref{eq:zetatrap_x2_cd} to integral operators of Laplace (Section \ref{sc:lap2d}), Helmholtz (Section \ref{sc:helm2d})  and Stokes (Appendix \ref{app:sto2d}).

%ssssssssssssssssssssssssss
\subsection{Zeta quadrature for the Laplace hypersingular operator} 
\label{sc:lap2d}
Consider the Laplace hypersingular potential on a smooth simple closed curve $\Gamma\subset\RR^2$
\begin{equation}
\cH[\sigma](\xx):=\int_\Gamma K(\xx,\yy)\sigma(\yy)\,\d s_\yy\equiv\frac{1}{2\pi}\int_\Gamma \Big(\frac{\nn_\xx\cdot \nn_\yy}{r^2}-2\mu_\xx\mu_\yy\Big)\,\sigma(\yy)\,\d s_\yy,\qquad \xx\in\Gamma
\label{eq:hyper2d_lap}
\end{equation}
where $r:=|\xx-\yy|$, $\nn_\xx$ denotes the unit outward normal at $\xx$; the terms $\mu_\xx := {(\xx-\yy)\cdot \nn_\xx}/{r^2}$ and $\mu_\yy := {(\xx-\yy)\cdot \nn_\yy}/{r^2}$ are smooth as $\yy\to\xx$. Without loss of generality, assume that $\Gamma$ is parameterized by a smooth periodic function $\bs\rho:[-a,a)\mapsto\Gamma$ such that $\xx = \bs\rho(0)$, then the hypersingular component of \eqref{eq:hyper2d_lap} is given by
\begin{equation}
\int_\Gamma\frac{\nn_\xx\cdot \nn_\yy}{2\pi r^2}\sigma(\yy)\,\d s_\yy \equiv  \int_{-a}^a\frac{(\nn(0)\cdot \nn(x))\,|\bs\rho'(x)|}{2\pi\,r(x)^2}\sigma(x)\,\d x,
\label{eq:lap_hyper_comp}
\end{equation}
where, using the parameterization $\yy=\bs\rho(x)$, it is understood that $\sigma(x)\equiv\sigma(\bs\rho(x))=\sigma(\yy)$ and similarly for other terms. Let $\bs\rho'_0:=\bs\rho'(0)$ and define a ``local bending'' term
\begin{equation}
B(x):=\frac{r(x)^2-|\bs\rho'_0 x|^2}{|\bs\rho'_0 x|^2},
\label{eq:local_bending_1d}
\end{equation}
which can be understood as the relative deviation of the extrinsic measure $r(x)^2\equiv|\bs\rho(x)-\bs\rho(0)|^2$ from the intrinsic measure $|\bs\rho'_0 x|^2$ near $0$. Note that $B(x)$ is smooth and $B(x)=O(x)$ as $x\to0$, so
\begin{equation}
\frac{1}{r(x)^2} = \frac{1}{|\bs\rho'_0x|^2\,(1+B(x))} = \frac{1}{|\bs\rho'_0x|^2}\sum_{k=0}^\infty (-1)^kB(x)^k = \frac{1-B(x)+B(x)^2}{|\bs\rho'_0x|^2}+O(x).
\label{eq:geo_expan}
\end{equation}
Then the integrand of \eqref{eq:lap_hyper_comp} becomes
$$
\frac{(\nn(0)\cdot \nn(x))|\bs\rho'(x)|}{2\pi\,r(x)^2} \sim \frac{g(x)}{x^2}+O(x),\quad  x\to0
$$
where
\begin{equation}
g(x):=\frac{(\nn(0)\cdot \nn(x))\,|\bs\rho'(x)|}{2\pi\,|\bs\rho'_0|^2}(1-B(x)+B(x)^2).
\end{equation}
Applying the quadrature \eqref{eq:zetatrap_x2_cd} to \eqref{eq:lap_hyper_comp} with $h=2a/N$ and $\varphi(x) = g(x)\sigma(x)$ yields
\begin{equation}
\begin{aligned}
\cH[\sigma](\xx) = \sum_{\substack{j=-N/2\\j\neq0}}^{N/2}K(\bs\rho(0),\bs\rho(jh))\sigma(jh)w_j + \Big(\frac{\lambda_0^2w_0}{4\pi}- \frac{\pi}{6w_0}\Big)\sigma(0) \\ +\frac{1}{2h}\sum_{j=-M}^Mc_jg(jh)\sigma(jh) +O(h^{2M+1})
\end{aligned}
\label{eq:zetatrap_lap_cd}
\end{equation}
where $w_j:=|\bs\rho'(jh)|\,h$ and where $\lambda_0$ is the curvature of $\Gamma$ at $\bs\rho(0)$ such that the $\lambda_0^2$ term is the diagonal limit of the $\mu_\xx\mu_\yy$ component of $K(\xx,\yy)$.

On the other hand, applying the quadrature \eqref{eq:zetatrap_x2_alt} yields
\begin{equation}
\cH[\sigma](\xx) = \sum_{\substack{j=-N/2\\j\text{ odd}}}^{N/2}K(\bs\rho(0),\bs\rho(jh))\sigma(jh)w_j - \frac{\pi\sigma(0)}{4w_0} +O(h^p),\quad \forall p>0
\label{eq:zetatrap_lap_alt}
\end{equation}

To compare the convergence of the above two formulae, we solve the Dirichlet problems for the Laplace equation
\begin{equation}
\begin{cases}
\Delta u = 0\;\text{in }\Omega,\quad u = \sigma\;\text{ on }\Gamma & \text{(interior)}\\
\Delta u = 0\;\text{in }\Omega^c,\quad u = \sigma\;\text{ on }\Gamma,\quad u\to\frac{\Sigma}{2\pi}\log|\xx| + \omega\;\text{ as }|\xx|\to\infty & \text{(exterior)}
\end{cases}
\end{equation}
where $\Sigma$ and $\omega$ are known constants. Using with the ansatz
$$
u(\xx) =  \begin{cases}
\cS[\tau](\xx) - \cD[\sigma](\xx) & \xx\in\Omega\text{ (interior)}\\
\cD[\sigma](\xx) - \cS[\tau](\xx) + \omega & \xx\in\Omega^c\text{ (exterior)},
\end{cases}
$$
the unknown Neumann data $\tau = \frac{\partial u}{\partial \nn}$ on $\Gamma$ is the solution of the BIE\cite{hsiao2008boundary}
\begin{equation}
\begin{cases}
(-\tfrac{1}{2}+\cD^*)\tau = \cH\sigma &\text{ (interior)}\\
(\tfrac{1}{2}+\cD^*)\tau + \int_\Gamma \tau\,\d s = \cH\sigma + \Sigma &\text{ (exterior)}
\end{cases}
\label{eq:bie_lap2d}
\end{equation}
Figure \ref{fig:hyper2d_lap_conv} shows the convergence results solving the BIEs, where $\cH\sigma$ is discretized using a $32^\text{nd}$-order central-difference quadrature \eqref{eq:zetatrap_lap_cd} as well as the alternating-grid spectral quadrature \eqref{eq:zetatrap_lap_alt}. Even though the alternating-grid quadrature is spectrally convergent, it uses only half of the available information for each target point and has bigger absolute errors than the central-difference zeta quadrature.

%ffffffffffffffffff
\begin{figure}[htbp]
\centering
\includegraphics[width=\textwidth]{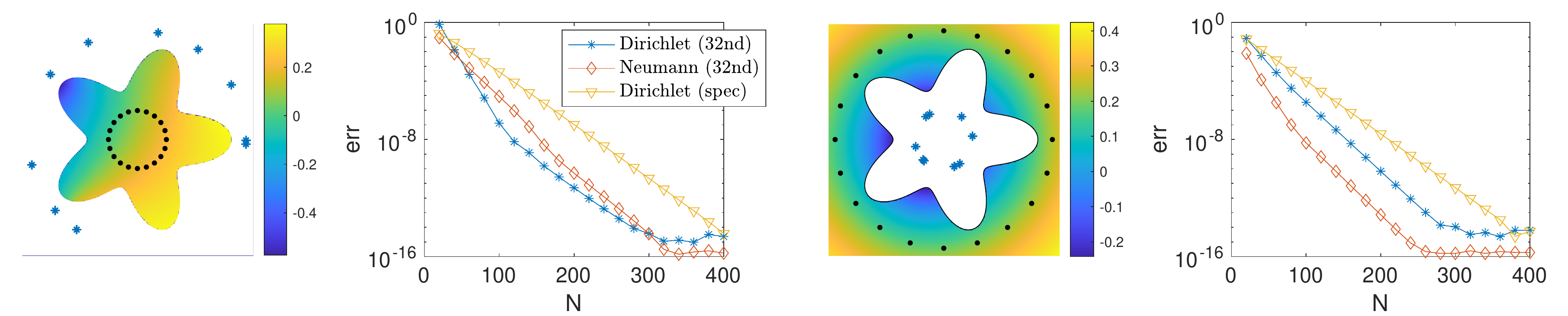}
\caption{Use the zeta quadrature rules to solve all four Laplace BVPs (Dirichlet/Neumann, interior/exterior) via the Green's representation integral equation formulation \eqref{eq:bie_lap2d}. Exact solutions are generated by point sources whose locations are shown by the blue asterisks, the $\infty$-norm errors of the solutions are measured at test points shown by the block dots. For the Dirichlet problems, results using a $32^\text{nd}$-order quadrature \eqref{eq:zetatrap_lap_cd} and using the spectral zeta quadrature \eqref{eq:zetatrap_lap_alt} are compared. It is observed that the algebraically high-order zeta quadrature uses a smaller number of degrees of freedom than the spectral quadrature to reach any given practical accuracy.}
\label{fig:hyper2d_lap_conv}
\end{figure}

%ssssssssssssssssssssssssss
\subsection{Zeta quadrature for the Helmholtz hypersingular operator}
\label{sc:helm2d}
Let $\kappa$ be the wave number, the Helmholtz hypersingular integral operator is
\begin{equation}
\begin{aligned}
\cH_\kappa[\sigma](x) &:= \int_\Gamma K_\kappa(\xx,\yy)\sigma(\yy)\,\d s_\yy\\
&\equiv \int_\Gamma \frac{i\kappa^2}{4}\big[H_0(\kappa r)\mu_\xx\mu_\yy+\frac{H_1(\kappa r)}{\kappa r}(\nn_\xx\cdot \nn_\yy-2\mu_\xx\mu_\yy)\Big]\sigma(\yy)\,\d s_\yy
\end{aligned}
\end{equation}
then with a similar derivation as for the Laplace hypersingular kernel, we have
\begin{equation}
\begin{aligned}
\cH_\kappa[\sigma](\xx)&=\sum_{\substack{j=-N/2\\j\neq0}}^{N/2}K_\kappa(\bs\rho(0),\bs\rho(jh))\sigma(jh)w_j + C_0 - \frac{\pi}{6 w_0}\sigma(0)\\
&\quad + \sum_{j=-M}^M\Big(\tilde{K}_\kappa(\bs\rho(0),\bs\rho(jh))\tilde{w}_j + \frac{1}{2h}c_jg(jh)\Big)\sigma(jh) + O(h^{2M+1})
\end{aligned}
\label{eq:zetatrap_helm_cd}
\end{equation}
where
$$
\begin{aligned}
C_0 &= \Big(\frac{\lambda_0^2}{4\pi} + \frac{i\kappa^2}{8} - \frac{\kappa^2}{4\pi}\log\frac{w_0\kappa}{2} + \frac{(1-2\gamma)\kappa^2}{8\pi}\Big)w_0\\
\tilde{K}_\kappa(x,y) &= \frac{\kappa^2}{2\pi}\Big(J_0(\kappa r)\mu_\xx\mu_\yy+\frac{J_1(\kappa r)}{\kappa r}(\nn_\xx\cdot \nn_\yy-2\mu_\xx\mu_\yy)\Big)\\
\end{aligned}
$$
and where $\tilde{w}_j$ are the zeta quadrature weights associated with the $\log\frac{1}{r}$ kernel\cite{wu2021zeta}, where the $w_0\log w_0$ term is included in $C_0$; the rest of the terms in \eqref{eq:zetatrap_helm_cd} are the same as those in \eqref{eq:zetatrap_lap_cd}.

We solve the Helmholtz equation $\Delta u + \kappa^2 u = 0$ in $\Omega^c$ with the Dirichlet condition $u=f$ or Neumann condition $u=g$ on $\Gamma$,
\begin{equation}
\begin{cases}
(\tfrac12+\cD_\kappa-i|\kappa|\cS_\kappa)\tau=f & \text{(Dirichlet)}\\
(-\tfrac12+\cD_\kappa^*+i|\kappa|\cH_\kappa)\sigma=g & \text{(Neumann)}
\end{cases}
\end{equation}
where the Neumann problem involves the hypersingular operator. Convergence results are shown in Fig. \ref{fig:hyper2d_helm_conv}.
%ffffffffffffffffff
\begin{figure}[htbp]
\centering
\includegraphics[width=\textwidth]{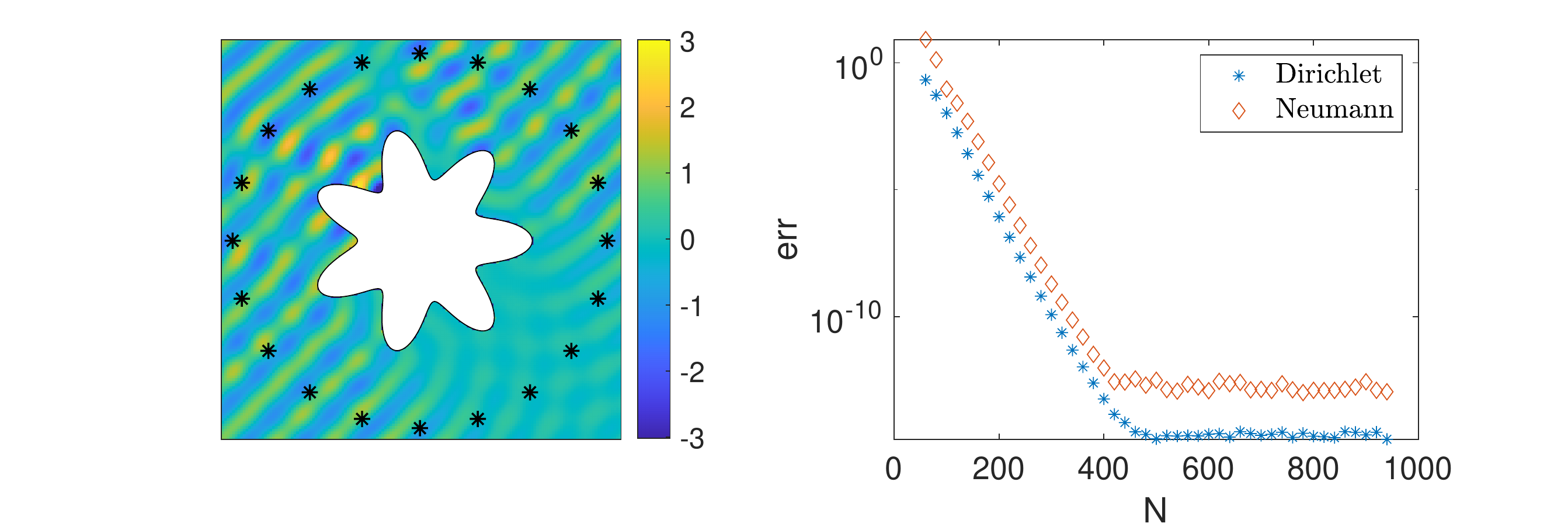}
\caption{Convergence results of solving Helmholtz exterior Dirichlet and Neumann BVPs using $32^\text{nd}$-order zeta quadratures. (a) Real part of sound-hard scattering of a planewave on a smooth obstacle. The asterisks ``$*$'' denote test locations where the error of the solution is measured.   (b) Convergence of max error at the test locations for the planewave sound-soft (Dirichlet) and sound-hard (Neumann) scattering. Note that the Neumann problem involves the hypersingular operator, resulting in absolute errors that are bigger than the Dirichlet problem.}
\label{fig:hyper2d_helm_conv}
\end{figure}

%ssssssssssssssssssssssssss
\section{Zeta quadrature for surface integrals in $\RR^3$, a unified approach}
\label{sc:zeta3d}
In this section, we describe a trapezoidal quadrature method that systematically treat integral operators on doubly-periodic surfaces, which is a generalization of the one-dimensional quadrature in Section \ref{sc:zeta2d}. 

We first set up the notations for this section. Let $\uu:=(u,v)\in\RR^2$ and $\ii:=(i,j)\in\ZZ^2$. Denote the 1-norms $|\ii|:=\max\{|i|,|j|\}$ and $|\uu|:=\max\{|u|,|v|\}$.  We will use $O(\uu^p)$ to denote terms of degree $p$ or higher in $\uu$, and $\Theta(\uu^p)$ to denote terms of degree exactly $p$.

Let $R=(a,b)\times(c,d)$ be a rectangular domain with $a<0<b$ and $c<0<d$. Let $\Gamma\subset\mathbb{R}^3$ be a smooth surface parameterized by $\bs\rho(\uu)$, $\uu\in R$. Denote the Jacobian $J(\uu):= |\bs\rho_u(\uu)\times\bs\rho_v(\uu)|$. 

For simplicity and without loss of generality, we assume that the singularities of the integrals in this section are located at $\xx=\bs\rho(\mathbf{0})$. Define the first fundamental form at $\xx$ as
\begin{equation}
Q(\uu)=Eu^2+2Fuv+Gv^2,
\end{equation}
where $E=\bs\rho_u(\mathbf{0})\cdot\bs\rho_u(\mathbf{0}),\,F=\bs\rho_u(\mathbf{0})\cdot\bs\rho_v(\mathbf{0}),\,G=\bs\rho_v(\mathbf{0})\cdot\bs\rho_v(\mathbf{0})$. 

In addition, we will need two important concepts associated with the quadratic form $Q(\uu)$. Define the \emph{Epstein zeta function} $Z(s;Q)$ as
\begin{equation}
\label{eq:epstein_zeta}
Z(s;Q) \equiv Z(s;E,F,G):=\sum_{\ii\in\ZZ^2\backslash\{\mathbf{0}\}}\frac{1}{(Ei^2+2Fij+Gj^2)^{s/2}},\quad \mathrm{Re}\,s>2
\end{equation}
which can then be extended to all $s\in\mathbb{C}\backslash\{2\}$ by analytical continuation. The Epstein zeta functions are generalizations of the Riemann zeta function to higher dimensions\cite{epstein1903theorie,epstein1906theorie}, such that the 1D analog of $Z(s;Q)$ would be $2\zeta(s)$.

Finally, define the \emph{Wigner limits} associated with $Q(\uu)^{-s/2}$ to be 
\begin{equation}
\cW^s[f]:=\lim_{h\to0}\lim_{N\to\infty}\left\{\sum_{0<|\ii|\leq N}\frac{f(\ii)}{Q(\ii)^\frac{s}{2}}\eta(\ii h) - \int_{|\uu|\leq N+\frac12}\frac{f(\uu)}{Q(\uu)^\frac{s}{2}}\eta(\uu h)\,\d\uu\right\}, \label{eq:wigner_limits}
\end{equation}
where $\eta(\uu)$ is a $C^\infty$ smooth function with compact support, which also satisfies $\eta(\mathbf{0})=1$ and $\eta(\uu)\equiv\eta(-\uu)$.

\begin{remark}
The Wigner limits were introduced by Eugene Wigner\cite{wigner1934} to calculate the electron sum of a body-centered cubic crystal system with a compensating positive charges in the background. The original definition can be written as
\begin{equation}
\lim_{N\to\infty}\left\{\sum_{0<|\ii|\leq N}\frac{1}{Q(\ii)^\frac{s}{2}} - \int_{|\uu|\leq N+\frac12}\frac{1}{Q(\uu)^\frac{s}{2}}\,\d\uu\right\}
\label{eq:wigner_limits_original}
\end{equation}
for $0<\mathrm{Re}\,s<2$ in 2D (see \cite{borwein2014lattice} for more details). In \cite{wu2021corrected}, we have generalized this definition to \eqref{eq:wigner_limits} for all $\mathrm{Re}\,s<2$, where the bump function $\eta(\uu)$ is introduced to ensure proper convergence when $\mathrm{Re}\,s\leq0$. We named \eqref{eq:wigner_limits} the ``converged Wigner-type limits'' to distinguish them from the original definition \eqref{eq:wigner_limits_original}. However, in this paper, we will simply call \eqref{eq:wigner_limits} the Wigner limits for simplicity, which should not cause any confusions; further more, we will show that this definition holds for all $s\neq2$ in the finite-part sense.
\end{remark}

%ssssssssssssssssssssssssss
\subsection{Generalized Euler-Maclaurin formula for double integrals} Common boundary integral operators for elliptic PDEs involve integrals of the form
\begin{equation}
\int_R\frac{\varphi(\xx,\yy)}{r^p}\,\d\uu,
\label{eq:generic_kernel0}
\end{equation}
where $r := |\xx-\yy| \equiv |\bs\rho(\mathbf{0})-\bs\rho(\uu)|,$ and where $\varphi$ is smooth on $\Gamma\times\Gamma$ such that
\begin{equation}
\varphi(\xx,\yy) \equiv \varphi(\bs\rho(\mathbf{0}),\bs\rho(\uu))\sim O(\uu^{2q})
\label{eq:offset}
\end{equation}
for some integer $q\geq0$. Similar to \eqref{eq:local_bending_1d}, one can define a ``local bending'' term
\begin{equation}
B(\uu):=\frac{r(\uu)^2-Q(\uu)}{Q(\uu)}
\end{equation}
which satisfies $B(\uu)\to0$ as $\uu\to\mathbf{0}$. (However, unlike \eqref{eq:local_bending_1d}, $B(\uu)$ is not smooth at $\mathbf{0}$.) Then expanding $r^{-p}$ with respect to $B(\uu)$ in \eqref{eq:generic_kernel0} gives
\begin{equation}
\frac{\varphi(\xx,\yy)}{r^p} = \varphi(\xx,\yy)\sum_{m=0}^\infty\binom{-\tfrac{p}{2}}{m}\frac{(r^2-Q)^m}{Q^{m+p/2}} = \sum_{m=0}^\infty\frac{O(\uu^{2q+3m})}{Q^{m+p/2}}.
\label{eq:generic_kernel}
\end{equation}
where the last equality uses the condition \eqref{eq:offset} and the fact that $r^2-Q = O(\uu^{3})$.

In order to integrate \eqref{eq:generic_kernel} to high-order, we need the following generalized Euler-Maclaurin formula, which is a 2D generalization of \eqref{eq:extendedEM}.

%thmmmmmmmmmmmm
\begin{theorem}
    \label{thm:extendedEM2D}
    Let $f\in C^\infty(R\setminus\{\mathbf{0}\})$, where $R=(a,b)\times(c,d)$ is a rectangular domain with $a<0<b$ and $c<0<d$. Assume that $f(x)$ is doubly-periodic or compactly supported in $R$ (i.e. $f(u,c)\equiv f(u,d)$ and $f(a,v)\equiv f(b,v)$ for all $a\leq u\leq b$ and $c\leq v\leq d$) and that $f(\uu)$ is smooth except at $\mathbf{0}$, where $f(\uu)$ has an asymptotic expansion
    \begin{equation}
    f(\uu) \sim \sum_{k=1}^\infty\sum_{l=0}^{n_k} \alpha_{n_k,l}^{s_k}\frac{u^{n_k-l}v^l}{Q(\uu)^{s_k/2}}, \quad \text{as }\uu\to\mathbf{0}
    \label{eq:sing_asymp_2D}
    \end{equation}
    where $0\leq n_k\in \ZZ$, $\alpha_{n_k,l}^{s_k}$ are coefficients for the expansion, and where $s_k\in\CC$ satisfies
    \begin{equation}
    s_k\neq2;\quad \mathrm{Re}\,s_0\leq\mathrm{Re}\,s_1\leq\mathrm{Re}\,s_2\leq\dots;\quad \lim_{k\to\infty}\mathrm{Re}\,s_k=\infty.
    \end{equation}
    Assume further that $h = (b-a)/M = (d-c)/N$ for some $M,N \in\mathbb{Z}^+$, let $u_i=a+ih,0\leq i\leq M,$ and  $v_j=c+jh, 0\leq j\leq N$, such that $u_{i_0}=v_{j_0}=0$ for some $0<i_0<M, 0<j_0<N$. Then as $h\to0$ (thus $M,N\to\infty$),
    \begin{equation}
    \sum_{\substack{0< j< M, 0< k< N\\ (i,j)\neq (i_0, j_0)}} f(u_i,v_j)\,h^2 \sim \int_R f(\uu)\,\d\uu + \sum_{\substack{k=1\\n_k=\text{even}}}^\infty\sum_{l=0}^{n_k} \alpha_{n_k,l}^{s_k}\cW^{s_k}\left[u^{n_k-l}v^l\right]h^{n_k-s_k+2}
    \label{eq:extendedEM2D}
    \end{equation}
\end{theorem}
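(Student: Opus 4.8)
The plan is to reduce the two-dimensional problem to the one-dimensional generalized Euler--Maclaurin formula of Theorem~\ref{thm:EM1d} by integrating out one variable at a time, exploiting linearity of both the sum and the finite-part integral in the coefficients $\alpha_{n_k,l}^{s_k}$. By linearity it suffices to treat a single monomial-type term $f(\uu) = u^{n-l}v^l\, Q(\uu)^{-s/2}$ with $n\in\ZZ_{\geq 0}$, $l\in\{0,\dots,n\}$, $s\in\CC\setminus\{2\}$, and then sum the resulting corrections; of course one must carry the smooth remainder along and check that its contribution is absorbed into higher-order terms of the same form, so strictly the reduction is to a finite sum of such monomial terms plus a remainder that is $C^\infty$ across $\mathbf{0}$ (to which the classical two-dimensional Euler--Maclaurin formula applies, producing only boundary terms that vanish by periodicity / compact support).

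First I would fix the inner variable and view the partial sum $h\sum_{j\neq j_0} f(u_i, v_j)$ for each fixed $i$. Along the line $v = \text{const}$ the function $u \mapsto u^{n-l}v^l Q(u,v)^{-s/2}$ has, as a function of $u$, an algebraic singularity only when that line passes through $\mathbf 0$, i.e. only for $i = i_0$; for $i \neq i_0$ the function is smooth in $u$ on the closed interval, so by the classical Euler--Maclaurin formula (the Remark after Theorem~\ref{thm:EM1d}) the one-dimensional trapezoidal sum equals the one-dimensional integral up to boundary terms in $u$ that telescope/cancel upon the outer sum over a periodic grid. For the single exceptional line $i = i_0$ one applies Theorem~\ref{thm:EM1d} in the variable $u$; here $Q(u,0)^{-s/2} = (Eu^2)^{-s/2}$ is genuinely singular, and the theorem contributes a term proportional to $h^{n-s+1}$ (the exponent $\alpha_k + 1$ with $\alpha_k = n-s$) times a $\zeta$-value, plus — crucially — no contribution at all when $n-l$ is odd by the parity of the monomial, and one must track the $\log h$ term, which appears only if some exponent equals $-1$, i.e. $s = n+1$. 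One then performs the analogous analysis in the outer variable $v$: the collected ``correction'' as a function of $v$ is itself smooth away from $v=0$ and singular at $v=0$ like a monomial over a power of $Q(0,v) = Gv^2$, so a second application of Theorem~\ref{thm:EM1d} (or the classical version where appropriate) yields the final double correction. The key identity to verify is that the iterated $\zeta$-value structure that emerges from this two-step procedure coincides exactly with the single object $\cW^{s}[u^{n-l}v^l]$, the Wigner limit from \eqref{eq:wigner_limits}, and that the combined exponent is $h^{(n-s+1) + 1} = h^{n-s+2}$ as claimed; the parity constraint ``$n_k$ even'' should drop out because an odd total degree $n$ forces at least one of the two iterated one-dimensional corrections to vanish by oddness.

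The main obstacle I anticipate is precisely the identification of the iterated one-dimensional construction with the Wigner limit $\cW^{s}[\cdot]$ defined via the two-dimensional lattice sum minus the two-dimensional integral with the bump function $\eta$. The iterated procedure naturally produces something like a sum over $\ZZ$ of one-dimensional $\zeta$-corrections, and re-assembling this into the symmetric two-dimensional regularized lattice sum requires an interchange-of-limits / Abel-summation argument that is delicate exactly in the regime $\mathrm{Re}\,s \leq 0$ where the individual pieces only converge in the analytically-continued or finite-part sense — this is why the definition \eqref{eq:wigner_limits} carries the cutoff $\eta$ and the nested limits $\lim_{h\to0}\lim_{N\to\infty}$. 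I would handle this first in the classical region $0 < \mathrm{Re}\,s < 2$ where everything converges absolutely (or conditionally in a controlled way), establish the identity there, and then extend to all $s\neq 2$ by analytic continuation in $s$, using that both sides of \eqref{eq:extendedEM2D} — the discrete sum against the finite-part integral, and the Epstein-zeta-type quantities hidden inside $\cW^{s}$ — are meromorphic in $s$ with the pole at $s=2$ already excised; this is the same continuation philosophy invoked for \eqref{eq:epstein_zeta} and in \cite{wu2021corrected}. A secondary technical point is the uniform control of the smooth remainder term in the expansion \eqref{eq:sing_asymp_2D}: one must truncate \eqref{eq:sing_asymp_2D} at order high enough that the tail is $C^{K}$ for $K$ as large as desired, apply the classical multidimensional Euler--Maclaurin formula to the tail, and send the truncation order to infinity to get the full asymptotic series — a standard but necessary bookkeeping step.
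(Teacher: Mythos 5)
Your reduction to iterated one--dimensional Euler--Maclaurin formulas has a genuine gap at its central step. You assume that on every grid line not passing through the singular point the (periodic) classical Euler--Maclaurin formula makes the one--dimensional trapezoidal error negligible, so that the only singular contributions come from the one exceptional line treated by Theorem \ref{thm:EM1d} plus a second one--dimensional correction in the outer variable. But the Euler--Maclaurin remainder is not uniform in the distance of the line to the singularity: on the row at height $v=mh$ the integrand $u^{n-l}v^l\,Q(u,v)^{-s/2}$ varies on the scale $|m|h$, so its $u$-derivatives grow like $(|m|h)^{n-s-2k}$, and the trapezoidal error on that single row is of size $h\,(|m|h)^{n-s}$ (up to factors decaying in $|m|$), not ``beyond all orders.'' Summing over $m$ these near-singular rows contribute in aggregate at exactly the order $h^{n-s+2}$ of the claimed correction; they are the bulk of the error, and they are what assembles into the full two--dimensional lattice constant. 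Ignoring them, your construction would produce a constant built only from Riemann-zeta data along the two coordinate axes (powers of $Eu^2$ and $Gv^2$), which cannot equal $\cW^{s}\bigl[u^{n-l}v^l\bigr]$: the Wigner limit \eqref{eq:wigner_limits} depends on the whole quadratic form $Q$, including the cross coefficient $F$, and the Epstein-zeta quantities behind it do not factor into iterated one--dimensional zeta corrections. So the ``identification with $\cW^s$'' that you flag as the main obstacle is not merely a delicate interchange of limits --- as set up, the iterated scheme computes the wrong quantity, and repairing it would force you to track the row-by-row errors of all rows at distances $O(h)$ to $O(1)$ from the singular point, i.e.\ essentially to re-derive the two--dimensional lattice-sum result from scratch.

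For comparison, the paper's proof bypasses iteration entirely: for each monomial term in \eqref{eq:sing_asymp_2D} the difference between the punctured two--dimensional trapezoidal sum and the (finite-part) integral is, after rescaling $\uu\mapsto\uu h$ and localizing with the bump function, precisely $\cW^{s_k}\bigl[u^{n_k-l}v^l\bigr]h^{n_k-s_k+2}$ by the very definition \eqref{eq:wigner_limits} (whose convergence and analytic-continuation properties were established in \cite{wu2021corrected}); odd-degree monomials drop out by antisymmetry, the smooth remainder and the edges of $R$ contribute nothing by periodicity, and superposition over $k$ gives \eqref{eq:extendedEM2D}. Your handling of the smooth remainder and of the boundary terms is consistent with this, but the core of your argument would need to be replaced by (or shown equivalent to) this direct two--dimensional scaling argument.
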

\begin{proof}
The proof of \eqref{eq:extendedEM2D} is a simple superposition of \eqref{eq:wigner_limits} with $s = s_k$ and $f$ replaced by the monomials $u^{n_k-l}v^l, k = 1, 2, \dots.$ When $n_k$ is odd, the Wigner limits are zero because the integrand and summand are odd functions, therefore only even $n_k$'s are present in \eqref{eq:extendedEM2D}. The periodicity of $f(\uu)$ ensures the vanishing of trapezoidal rule errors on the edges of $R$.
\end{proof}

The generalized Euler-Maclaurin formula \eqref{eq:extendedEM2D} provides an error expansion for the double-trapezoidal rule approximation of integrals with $Q(\uu)^{-\frac{s}{2}}$ type singularities ($s\neq2$). Wigner limits of the form $W^{s}\left[u^{2n-l}v^l\right]$ are involved in \eqref{eq:extendedEM2D}, which when evaluated using the definition \eqref{eq:wigner_limits} lead to numerical subtraction errors as $h\to0$. Fortunately, the next theorem allows us to evaluate the Wigner limits as the values or parametric derivatives of the Epstein zeta functions, which is both computationally cheaper and more accurate. For details on the algorithms evaluating the Epstein zeta functions, we refer to \cite{wu2021corrected}.

%thmmmmmmmmmmmm
\begin{theorem}
    \label{thm:wigner}
    Suppose $f(\uu)=u^{2n-l}v^l, 0\leq l\leq 2n$, is a monomial of degree $2n$ for some integer $n\geq0$, then the Wigner limit $\cW^s[f]$ has an analytic expression given by the associated Epstein zeta function and its parametric derivatives, as follows
    \begin{equation}
    \cW^s\left[u^{2n-l}v^l\right]= 
     \begin{cases}
        \displaystyle \tfrac{\Gamma(-s/2+1)}{\Gamma(n-s/2+1)}\left(\frac{\pd}{\pd E}\right)^{n-l} \left(\frac{1}{2}\frac{\pd}{\pd F}\right)^lZ\big(s-2n;Q\big), & l\leq n;\\[15pt]
        \displaystyle \tfrac{\Gamma(-s/2+1)}{\Gamma(n-s/2+1)}\left(\frac{1}{2}\frac{\pd}{\pd F}\right)^{2n-l}\left(\frac{\pd}{\pd G}\right)^{l-n}Z\big(s-2n;Q\big), & l\geq n.
      \end{cases}
    \label{eq:wigner_epstein}
    \end{equation}
    On the other hand, when $f(\uu)$ is a monomial of odd degrees, then $\cW^s[f]\equiv0$.
\end{theorem}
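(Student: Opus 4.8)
The plan is to connect the Wigner limit $\cW^s[u^{2n-l}v^l]$ to the Epstein zeta function $Z(s-2n; Q)$ by recognizing that differentiating $Z(s;Q)$ in the quadratic-form parameters $E,F,G$ produces exactly the monomial prefactors in the lattice sum, up to a shift in $s$ and a ratio of Gamma functions. The key algebraic identity is that $\frac{\partial}{\partial E} (Ei^2+2Fij+Gj^2)^{-t/2} = -\frac{t}{2}\, i^2\,(Ei^2+2Fij+Gj^2)^{-t/2-1}$, and similarly $\frac{1}{2}\frac{\partial}{\partial F}$ brings down a factor $ij$ and $\frac{\partial}{\partial G}$ brings down $j^2$. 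Applying $n-l$ copies of $\partial_E$ and $l$ copies of $\tfrac12\partial_F$ to $Z(t;Q)$ with $t = s-2n$ therefore produces $\sum_{\ii\neq\mathbf 0} i^{2(n-l)}(ij)^l Q(\ii)^{-(s-2n)/2 - n} \cdot (\text{product of }-t/2,\dots)$; the monomial exponents are $i^{2(n-l)+l} j^l = i^{2n-l}j^l$, matching $f(\ii)$, and the denominator power is $Q(\ii)^{-s/2}$, matching the Wigner definition. Collecting the scalar factors $\prod$ of the $-t/2$-type constants gives precisely $\Gamma(-s/2+1)/\Gamma(n-s/2+1)$ when $t=s-2n$ — this is a telescoping of $(-t/2)(-t/2-1)\cdots(-t/2-n+1) = (-1)^n \Gamma(t/2+n)/\Gamma(t/2)$ and rearranging. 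The case $l\geq n$ is handled symmetrically, with $2n-l$ copies of $\tfrac12\partial_F$ and $l-n$ copies of $\partial_G$ producing $i^{2n-l} j^{(2n-l)+2(l-n)} = i^{2n-l} j^l$, the same monomial, so the two formulas agree on the overlap $l=n$.

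First I would establish the identity on the region $\mathrm{Re}\,s > 2n+2$, where the shifted Epstein series $Z(s-2n;Q)$ converges absolutely and termwise differentiation in $E,F,G$ is justified by dominated convergence; there the Wigner limit $\cW^s[f]$ also reduces (since $\mathrm{Re}(s-2n)>2$ means the monomial-weighted lattice sum converges and the integral converges, so the $\eta$-regularization and the limits in $N,h$ are trivial) to the same absolutely convergent sum $\sum_{\ii\neq\mathbf 0} f(\ii) Q(\ii)^{-s/2}$. Matching the two expressions on this half-plane is then a finite computation with the differentiation rule above. Second, I would invoke analytic continuation: both sides of \eqref{eq:wigner_epstein} are analytic (in fact meromorphic) functions of $s$ on $\CC\setminus\{2\}$ — the right side because $Z(\cdot;Q)$ continues analytically away from the single pole at argument $2$, i.e. away from $s-2n=2$, combined with the entire function $1/\Gamma(n-s/2+1)$ which actually cancels any pole, and the left side because the Wigner limit was shown in \cite{wu2021corrected} (and is asserted in the Remark to extend in the finite-part sense) to define a meromorphic function of $s$ with the same analytic structure. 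Since they agree on an open set, they agree everywhere on $\CC\setminus\{2\}$. The odd-degree case is immediate: if $f$ is a monomial of odd total degree then $f(-\uu)=-f(\uu)$, so each symmetrized partial sum in \eqref{eq:wigner_limits} vanishes identically (pairing $\ii$ with $-\ii$, using $Q(-\ii)=Q(\ii)$ and $\eta(-\uu)=\eta(\uu)$), and likewise the integral vanishes, hence $\cW^s[f]\equiv 0$.

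The main obstacle I anticipate is not the combinatorics — the Gamma-ratio bookkeeping is routine once the differentiation rule is fixed — but rather making the analytic-continuation argument airtight, specifically verifying that the left-hand side $\cW^s[f]$, as defined by the iterated limit in \eqref{eq:wigner_limits} with the bump function $\eta$, genuinely extends to a meromorphic function of $s$ agreeing with its absolutely-convergent avatar on $\mathrm{Re}\,s>2n+2$. This requires either citing the precise continuation statement from \cite{wu2021corrected} or reproducing a Mellin/Epstein-type argument showing the $h\to0$, $N\to\infty$ limits exist and are $s$-analytic; care is needed because the regularizer $\eta$ must be shown not to affect the limit (its role is only to tame divergences for $\mathrm{Re}\,s\leq 0$, and one must check it washes out). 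A secondary technical point is confirming that the factor $1/\Gamma(n-s/2+1)$ on the right correctly cancels the pole of $Z(s-2n;Q)$ at $s=2n+2$, so that the claimed formula is actually pole-free there (as it must be, since $\cW^s[f]$ is regular at $s=2n+2$ when $n\geq1$); this is a consistency check rather than a genuine difficulty, but it is worth stating explicitly.
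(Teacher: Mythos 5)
Your core algebra is the same as the paper's: parametric differentiation of the Epstein zeta in $E,F,G$ brings down the monomials, and the prefactor $\Gamma(-s/2+1)/\Gamma(n-s/2+1)$ is just the reciprocal of the product $(1-s/2)(2-s/2)\cdots(n-s/2)$ generated by $n$ such differentiations; your odd-degree symmetry argument also matches the paper's. The problems are in the analytic part. First, your launching claim for $\mathrm{Re}\,s>2n+2$ is wrong as stated: the lattice sum indeed converges absolutely there, but the integral in \eqref{eq:wigner_limits} does \emph{not} converge --- in polar coordinates the integrand with $f$ of degree $2n$ scales like $r^{\,2n-\mathrm{Re}\,s+1}\,\d r$ near the origin, which is non-integrable precisely when $\mathrm{Re}\,s>2n+2$. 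The Wigner limit there only makes sense in the finite-part sense, and the fact you actually need (and which the paper supplies, for $f\equiv1$, via $\int_{\RR^2}Q(\uu)^{-s/2}\d\uu=0$ in the finite-part sense) is that this finite part vanishes, so that $\cW^s$ reduces to the bare lattice sum. ``The integral converges'' cannot serve as the justification; the reduction must go through the vanishing finite part.

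Second, and more seriously, your continuation step is a genuine gap: you extend from the half-plane to all $s\neq2$ by analytic continuation of \emph{both} sides in $s$, which requires that $s\mapsto\cW^s[f]$, defined through the iterated $\eta$-regularized limits, be analytic on $\CC\setminus\{2\}$. You flag this yourself as the main obstacle but supply no argument, and it is not what the cited source provides: the paper uses \cite{wu2021corrected} only to get the identity directly on $\mathrm{Re}\,s<2$, not to establish analyticity of the Wigner limit in $s$. The paper's proof is structured precisely to avoid needing that fact: it proves the base identity $\cW^{s}[1]=Z(s;Q)$ separately on $\mathrm{Re}\,s<2$ (prior work) and on $\mathrm{Re}\,s>2$ (finite-part reduction to the convergent sum), and then obtains every even monomial by applying $(\pd/\pd E)^{n-l}(\tfrac12\,\pd/\pd F)^{l}$ (or the symmetric $F,G$ version) to $\cW^{s-2n}[1]=Z(s-2n;Q)$ at \emph{fixed} $s$ --- differentiation in the parameters of $Q$, never continuation in $s$. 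Note also that even if you import the $\mathrm{Re}\,s<2$ case from \cite{wu2021corrected} and keep your half-plane $\mathrm{Re}\,s>2n+2$, the strip $2<\mathrm{Re}\,s\leq 2n+2$ remains uncovered for $n\geq1$ unless you either prove the $s$-analyticity you allude to or adopt the paper's parameter-differentiation device; as written, the proposal does not close this.
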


\begin{proof}
When $f(\uu)$ is a monomial of odd degrees, both the summand and the integrand in \eqref{eq:wigner_limits} are anti-symmetric, therefore $\cW^s[f]\equiv0$.

When $f(\uu)=u^{2d-l}v^l$ is a monomial of an even degree, the proof follows exactly as in \cite[Theorem 4]{wu2021corrected} for $\mathrm{Re}\,s < 2$. For $\mathrm{Re}\,s > 2$, note that
\begin{equation}
\int_{|\uu|<\infty}\frac{1}{Q(\uu)^{s/2}}\d\uu = |Q|^{-1/2}\int_{|\uu|<\infty}\frac{1}{(u^2+v^2)^{s/2}}\d\uu = |Q|^{-1/2}\,2\pi\int_{0}^\infty\frac{1}{r^{s-1}}dr=0
\end{equation}
in the finite-part sense, where $|Q| := EG-F^2$ is the determinant of $Q(\uu)$, therefore the Wigner limit \eqref{eq:wigner_limits} reduces to
\begin{equation}
\cW^s[1] = \sideset{}'\sum_{\ii}\frac{1}{Q(\ii)^\frac{s}{2}} = Z(s;Q), \quad \mathrm{Re}\,s > 2.
\end{equation}
Therefore $\cW^s[1]\equiv Z(s;Q)$ holds for all $s\neq2$ under analytic continuation. In particular, applying appropriate parametric derivatives to $\cW^{s-2n}[1] = Z(s-2n;Q)$ on both sides yields \eqref{eq:wigner_epstein}.
\end{proof}

The computation of the Wigner limits $W^s[\uu^{2n}]$ using the formulae \eqref{eq:wigner_epstein} requires evaluating the $n$-th derivatives of the Epstein zeta functions with respect to the parameters $E, F,$ and $G$. We include the procedure for computing the Epstein zeta derivatives in Appendix \ref{app:compute_epstein}.

Theorems \ref{thm:extendedEM2D} and \ref{thm:wigner} together provide a practical way to correct the Trapezoidal quadrature errors associated with integrands of the form \eqref{eq:generic_kernel}.

\paragraph{Example 1} The Laplace single-layer potential on a rectangular parametric domain has the form  :
\begin{equation}
\int_\Gamma\frac{\sigma(\yy)}{4\pi\,r}\,\d s_\yy \equiv \int_R\frac{\varphi(\uu)}{r(\uu)}\,\d\uu,\qquad \frac{\varphi(\uu)}{r(\uu)} \sim \frac{O(1)}{\sqrt{Q}}+\frac{O(\uu^3)}{\sqrt{Q}^3}+\frac{O(\uu^6)}{\sqrt{Q}^5}+\dots
\end{equation}
where the smooth function $\varphi(\uu):=J(\uu)\,\sigma(\uu)/(4\pi)$. An $O(h^P)$ quadrature using \eqref{eq:extendedEM2D} requires correcting the errors due to $\frac{\Theta(\uu^m)}{Q(\uu)^{n+1/2}}$ for all $p$ even and $3n\leq m < 2n+P-1$, $n = 0,1,\dots$. 

For an $O(h^3)$ quadrature ($P=3$), such $m$ exists only when $n = 0$, where $m=0$, so the corresponding term is $\frac{\varphi(\mathbf{0})}{\sqrt{Q}} = \frac{\Theta(1)}{\sqrt{Q}}.$
Then truncating \eqref{eq:extendedEM2D} gives
\begin{equation}
\int_R \frac{\varphi(\uu)}{r(\uu)}\,\d \uu = \sum_{\substack{0< j< M, 0< k< N\\ (u_j,v_k)\neq (0, 0)}} \frac{\varphi(u_j,v_k)}{r(u_j,v_k)}\,h^2  -  \varphi(\mathbf{0})Z(1;Q) h + O(h^3).
\end{equation}
On the other hand, an $O(h^5)$ quadrature requires correcting components with $m$ even and $3n\leq m < 2n+4$, $n = 0,1,2$, 
resulting in the quadrature
\begin{equation}
\begin{aligned}
&\int_R \frac{\varphi(\uu)}{r(\uu)}\,\d \uu = \sum_{\substack{0< j< M, 0< k< N\\ (u_j,v_k)\neq (0, 0)}} \frac{\varphi(u_j,v_k)}{r(u_j,v_k)}\,h^2  -  \varphi(\mathbf{0})Z(1;Q) h\\
& - h^3\left(\sum_{l=0}^{2} \alpha_{2,l}^1\cW^{1}\left[u^{2-l}v^l\right]+\sum_{l=0}^{4} \alpha_{4,l}^3\cW^{3}\left[u^{4-l}v^l\right]+\sum_{l=0}^{6} \alpha_{6,l}^5\cW^{5}\left[u^{6-l}v^l\right]\right) +  O(h^5).
\label{eq:slp_Oh5_raw}
\end{aligned}
\end{equation}
where the coefficients $\alpha_{p,l}^s$ in the $O(h^3)$ term involve $\varphi$ and its derivatives at $\bs\rho(\mathbf{0})$, which requires higher derivatives of the geometry; these expressions are a little involved but only need to be calculated once, they are given in \cite[section 5]{wu2021corrected}. 

The fact that higher derivatives of the geometry are required to compute the coefficients $\alpha_{p,l}^{s}$ makes it highly inconvenient to derive high-order quadrature schemes, which becomes even more tedious for different kernels other than the Laplace SLP; moreover, boundary data in practice are often given numerically, so the analytic expressions for the higher derivatives are unavailable anyway. Fortunately, all these difficulties can be avoided. Next, we develop a moment-fitting approach that allows easy construction of high-order quadrature rules for all kinds of elliptic kernels.

%sssssssssssss
\subsection{Moment fitting on local stencils}
Suppose one wants an $O(h^P)$ trapezoidal rule for the integrating \eqref{eq:generic_kernel}, it suffices to consider one term
\begin{equation}
\frac{\varphi\cdot(r^2-Q)^m}{Q^{(2m+p)/2}} = \frac{O(\uu^{2q+3m})}{Q^{(2m+p)/2}}.
\label{eq:one_term}
\end{equation}
By \eqref{eq:extendedEM2D}, a ${\Theta(\uu^{2k})}/{Q^{(2m+p)/2}}$ term in \eqref{eq:one_term} contributes errors of the form
\begin{equation}
\sum_{l=0}^{2k}\alpha_{k,l}\cW^{2m+p}\left[u^{2k-l}v^l\right]h^{2k-(2m+p)+2}
\label{eq:2k_moment}
\end{equation}
where the coefficients $\alpha_{k,l}$'s depend on $\varphi\cdot(r^2-Q)^m$ and its derivatives at $\uu=\mathbf{0}$; we call the errors \eqref{eq:2k_moment} the \emph{$2k$-moments}. So to achieve $O(h^P)$ accuracy one must correct all the $2k$-moments in \eqref{eq:one_term} where $2q+3m\leq 2k < P+p+2m-2$, or 
\begin{equation}
K_1\leq k\leq K_2,\quad \text{where } 
\begin{cases}
K_1 \equiv K_1(m,q) := q+\left\lceil\frac{3m}{2}\right\rceil\\
K_2 \equiv K_2(m,P,p) := \left\lceil\frac{P+p}{2}\right\rceil+m-2.
\end{cases}
\label{eq:bounds_for_k}
\end{equation}
In addition, $k$ exists in \eqref{eq:bounds_for_k} only when $K_1\leq K_2$, or $q+\lceil\frac{3m}{2}\rceil\leq \lceil\frac{P+p}{2}\rceil+m-2$, which implies that $m$ has an upper bound 
\begin{equation}
0\leq m\leq M, \quad\text{where } M:=2\left\lceil\frac{P+p}{2}\right\rceil-2q-4.
\label{eq:bounds_for_m}
\end{equation}
Therefore, one only has to correct finitely many $2k$-moments to get a $O(h^P)$ trapezoidal rule for integrating \eqref{eq:generic_kernel}. We next describe the moment-fitting procedure to fit all these error moments on a local stencil around the singular point.

\begin{figure}[htbp]
\centering
\includegraphics[width=0.8\textwidth]{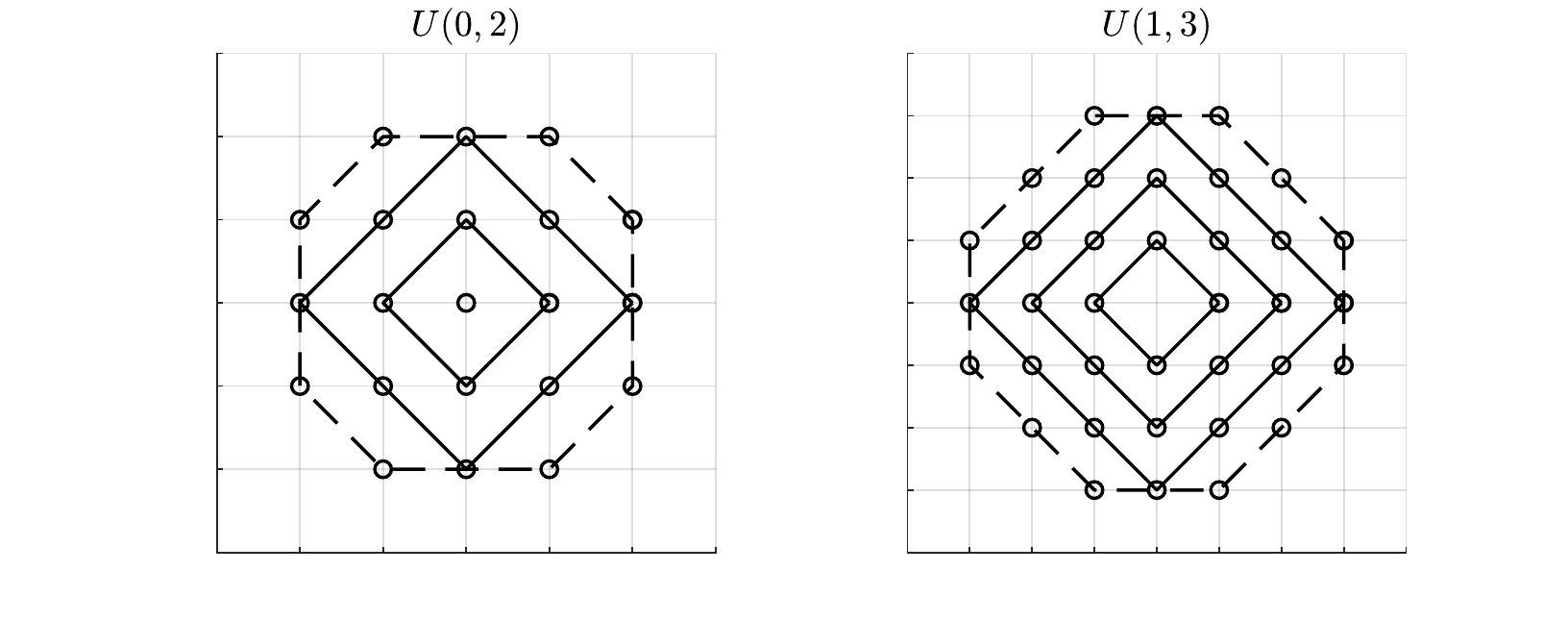}
\caption{Example correction stencils. Let ``layer $k$'' be the set of all the points in $\ZZ^2$ whose $1$-norm equals $k$, $k\geq0$. Then a stencil $U(K_1,K_2)$ includes layer $k$ for $K_1\leq k\leq K_2+1$ and excludes the four points on the axes in layer $(K_2+1)$. In the figure, the solid and dashed lines show the layers in $U(K_1,K_2)$.}\label{fig:new_stencil}
\end{figure}

\paragraph{Moment-fitting procedure} For a fixed $m$, the error moments \eqref{eq:2k_moment} associated with the term \eqref{eq:one_term} can be approximated to $O(h^P)$ accurate using the following steps.
\begin{enumerate}
\item For $K_1, K_2$ specified in \eqref{eq:bounds_for_k}, define the stencil $U(K_1,K_2)$ as
    \begin{equation}
    U(K_1,K_2):=\{(\mu,\nu)\in\ZZ\,:\,K_1\leq |\mu|+|\nu|\leq K_2+1, \max(|\mu|,|\nu|)\leq K_2\}
    \label{eq:stencil}
    \end{equation}
    See Figure \ref{fig:new_stencil} for example stencils.
\item Define moment-fitting weights $\tau_{\mu,\nu}$, $(\mu,\nu)\in U(K_1,K_2)$. We solve for $\tau_{\mu,\nu}$ by imposing two sets of conditions. Firstly, the moment-fitting conditions for $2k$-moments \eqref{eq:2k_moment} are
    \begin{equation}
    \sum_{(\mu,\nu)\in U(K_1,K_2)} (\mu h)^{2k-l_k}(\nu h)^{l_k}\tau_{\mu,\nu}\,h^{2-s} = -\cW^{s}\left[u^{2k-l_k}v^{l_k}\right]h^{2k-s+2}
    \label{eq:moment_fit_1}
    \end{equation}
    for $l_k = 0,1,\dots,2k$ and for $K_1\leq k\leq K_2$ as in \eqref{eq:bounds_for_k}, and where $s: = 2m+p$. Note that \eqref{eq:moment_fit_1} is independent of $h$ after cancelation. Secondly, we impose the following symmetry conditions
    \begin{equation}
    \begin{aligned}
    \tau_{\mu,\nu} &= \tau_{-\mu,-\nu} &&\text{for all }(\mu,\nu)\in U(K_1,K_2),\\
    \tau_{\mu,\nu} &= -\tau_{-\mu,\nu} &&\text{if }|\mu|+|\nu|=K_2+1,\\
    \tau_{\mu,\nu} &= \tau_{-\mu,\nu} &&\text{if }|\mu|+|\nu|=K_1.
    \end{aligned}
    \label{eq:moment_fit_2}
    \end{equation}
    It is not hard to see that both the number of points in $U(K_1,K_2)$ and the total number of equations in (\ref{eq:moment_fit_1}--\ref{eq:moment_fit_2}) are $2(K_1+K_2)(K_2-K_1+1)+4K_2+\delta_{0,K_1}$, thus the weights $\tau_{\mu,\nu}$ are well-defined.
\item Consequently, when integrating \eqref{eq:generic_kernel} using the trapezoidal rule, the error contribution from one term \eqref{eq:one_term} is given by the approximation
\begin{equation}
\sum_{(\mu,\nu)\in U(K_1,K_2)}\varphi(\mu h,\nu h)\,\Big(r(\mu h,\nu h)^2-Q(\mu h,\nu h)\Big)^m\,\tau_{\mu,\nu}\,h^{2-s}+ O(h^{P}).
\label{eq:zeta_corr_1}
\end{equation}
\end{enumerate}
Repeating the above procedure for $m$ in the range \eqref{eq:bounds_for_m}, the errors in the trapezoidal rule approximation of \eqref{eq:generic_kernel0} are corrected up to $O(h^P)$. We are ready to state the unified trapezoidal quadrature.

%ssssssssss
\subsection{Unified zeta quadrature for boundary integral operators}
\label{sc:unified_zeta_BIO3d}

%thmmmmmmmmmmmm
\begin{theorem}[Unified Zeta Quadrature]
\label{thm:unified_zeta3d}
Suppose $\varphi(\uu)$ is doubly-periodic on (or compactly supported in) $[-a,a]^2$ and is infinitely differentiable, $\varphi(\uu) = O(\uu^{2q})$ for some integer $q\geq0$, then
\begin{equation}
\int_{|\uu|\leq a}\frac{\varphi(\uu)}{r(\uu)^p}\,\d\uu = \sideset{}'\sum_{|\ii |\leq N}\frac{\varphi(\ii h)}{r(\ii h)^p}h^2 + C_p^h[\varphi;P,q] + O(h^P)
\label{eq:zetatrap_general3d}
\end{equation}
where $h=a/N$ and the correction formula $C_p^h[\varphi;P,q]$ is given by
\begin{equation}
    C_p^h[\varphi;P,q] = \sum_{m=0}^{M}\quad\sum_{(\mu,\nu)\in U(K_1^m,K_2^m)}\binom{-\frac{p}{2}}{m}\,\tilde{\varphi}_m(\mu h,\nu h)\,\tau^m_{\mu,\nu}\,h^{2-p-2m},
    \label{eq:zetatrap_general3d_corr}
\end{equation}
where $M=2\lceil\frac{P+p}{2}\rceil-2q-4$, where $\tilde{\varphi}_m(\uu):=[r(\uu)^2-Q(\uu)]^m\varphi(\uu)$, where the stencils $U(K_1^m,K_2^m)$ are defined in \eqref{eq:stencil} with $K_1^m = q+\lceil\frac{3m}{2}\rceil$ and $K_2^m= \lceil\frac{P+p}{2}\rceil+m-2$, and where the associated weights $\tau^m_{\mu,\nu}$ are the solution of \eqref{eq:moment_fit_1}--\eqref{eq:moment_fit_2}.
\end{theorem}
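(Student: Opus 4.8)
The plan is to assemble Theorem \ref{thm:unified_zeta3d} from the three ingredients already established: the kernel expansion \eqref{eq:generic_kernel}, the generalized Euler-Maclaurin formula \eqref{eq:extendedEM2D} (Theorem \ref{thm:extendedEM2D}), and the moment-fitting construction \eqref{eq:moment_fit_1}--\eqref{eq:zeta_corr_1}. First I would apply the binomial expansion \eqref{eq:generic_kernel} to write $\varphi(\uu)/r(\uu)^p = \sum_{m\geq0}\binom{-p/2}{m}\tilde\varphi_m(\uu)/Q(\uu)^{(2m+p)/2}$ with $\tilde\varphi_m(\uu)=[r(\uu)^2-Q(\uu)]^m\varphi(\uu) = O(\uu^{2q+3m})$, and observe that since each term is $Q^{-s_m/2}$-singular with $s_m=2m+p$, Theorem \ref{thm:extendedEM2D} applies term-by-term. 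This produces, for each $m$, an error expansion indexed by the even degrees $2k$ of the Taylor expansion of $\tilde\varphi_m$ at $\mathbf 0$, with coefficients that are Wigner limits $\cW^{s_m}[u^{2k-l}v^l]$; these are exactly the $2k$-moments \eqref{eq:2k_moment}.

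Next I would perform the truncation bookkeeping. The contribution of the degree-$2k$ piece of the $m$-th term carries a factor $h^{2k-s_m+2}$, so it is $O(h^P)$ unless $2k-s_m+2<P$, i.e. $2k<P+p+2m-2$; combined with the lower bound $2k\geq 2q+3m$ forced by \eqref{eq:offset}, this gives the range $K_1^m\leq k\leq K_2^m$ of \eqref{eq:bounds_for_k} and the upper bound $m\leq M$ of \eqref{eq:bounds_for_m}. Hence only finitely many $2k$-moments, over finitely many $m$, need correcting — everything else is absorbed into the $O(h^P)$ remainder (here I would invoke the periodicity/compact-support hypothesis so that the boundary-edge trapezoidal errors in \eqref{eq:extendedEM2D} vanish, and note that odd-degree moments vanish by Theorem \ref{thm:wigner}). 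Then, for each fixed $m$, I would invoke the moment-fitting step: the weights $\tau^m_{\mu,\nu}$ on the stencil $U(K_1^m,K_2^m)$ are defined by \eqref{eq:moment_fit_1}--\eqref{eq:moment_fit_2} precisely so that the stencil sum $\sum_{(\mu,\nu)}\tilde\varphi_m(\mu h,\nu h)\tau^m_{\mu,\nu}h^{2-s_m}$ reproduces all the target $2k$-moments up to $O(h^P)$ — this is the content of \eqref{eq:zeta_corr_1}, whose justification is a Taylor expansion of $\tilde\varphi_m$ about $\mathbf 0$ against the moment conditions, with the tail of the Taylor expansion again $O(h^P)$. Summing over $m$ with the weights $\binom{-p/2}{m}$ yields the correction formula \eqref{eq:zetatrap_general3d_corr}, and subtracting it from the punctured trapezoidal sum gives \eqref{eq:zetatrap_general3d}.

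The one point that genuinely requires care — and which I expect to be the main obstacle — is the interchange of the infinite $m$-sum with the asymptotic ($h\to0$) analysis, together with a uniform control of the remainder. Formally \eqref{eq:generic_kernel} is an infinite series, but as an asymptotic statement near $\mathbf 0$ only the terms $m\leq M$ matter; I would make this rigorous by splitting $\varphi/r^p$ into $\sum_{m\leq M}$ plus a remainder that is $O(\uu^{2q+3(M+1)}/Q^{(2(M+1)+p)/2}) = O(\uu^{\,\text{large}})$ near $\mathbf 0$, smooth away from $\mathbf 0$, and to which the classical ($K=0$, regular) case of \eqref{eq:extendedEM2D} applies to give an $O(h^P)$ trapezoidal error directly (choosing $M$ one larger if needed so the remainder's leading singular exponent exceeds the threshold). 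A secondary subtlety is that \eqref{eq:moment_fit_1} simultaneously fixes the $2k$-moments for \emph{all} $k$ in the range with a \emph{single} set of weights $\tau^m_{\mu,\nu}$; this is consistent only because the counting identity at the end of the moment-fitting procedure guarantees the linear system \eqref{eq:moment_fit_1}--\eqref{eq:moment_fit_2} is square and (generically) nonsingular, so I would cite that count rather than re-prove solvability. Once these two points are dispatched, the rest is the routine reorganization sketched above.
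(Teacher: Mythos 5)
Your proposal is correct and follows essentially the same route as the paper, which gives no standalone proof of Theorem \ref{thm:unified_zeta3d} beyond the derivation preceding it: the binomial expansion \eqref{eq:generic_kernel}, term-by-term application of Theorems \ref{thm:extendedEM2D} and \ref{thm:wigner}, the bookkeeping \eqref{eq:bounds_for_k}--\eqref{eq:bounds_for_m}, and the fitted stencil corrections \eqref{eq:zeta_corr_1} with the counting argument for the weights. Your added care about truncating the infinite $m$-series and controlling the Taylor tail on the stencil supplies details the paper leaves implicit and is fully consistent with its argument.
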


\begin{remark}
\label{rmk:partial_kern_indep}
Compared to \cite{wu2021corrected} which computes all the coefficients in the generalized Euler-Maclaurin formula \eqref{eq:extendedEM2D}, the quadrature \eqref{eq:zetatrap_general3d} used a new moment-fitting approach to avoid computing any higher derivatives of the smooth components $\varphi\cdot (r^2-Q)^m$; only the first derivatives of the geometry are needed to construct $Q(\uu)$ and the weights $\tau^m_{\mu,\nu}$. This fact significantly simplifies the derivation of high-order quadrature rules and enables flexible application to different integral operators. Note that in the quadrature \eqref{eq:zetatrap_general3d}, $\varphi$ is an arbitrary smooth function (which can also be vector-valued), thus all common boundary integral operators associated with elliptic PDEs can be handled by \eqref{eq:zetatrap_general3d} as is.
\end{remark}

\paragraph{Example 1 (continued)}  Consider again the $O(h^5)$ trapezoidal rule \eqref{eq:slp_Oh5_raw} for the Laplace SLP. Using \eqref{eq:zetatrap_general3d} with $p=1, q=0, P=5,$ and $\varphi(\uu)=J(\uu)\,\sigma(\uu)/(4\pi)$, we obtain a new $O(h^5)$ trapezoidal rule on the stencil $U(0,3)$ (37 points total).
\begin{equation}
\begin{aligned}
\int_{|\uu|\leq a}\frac{\varphi(\uu)}{r(\uu)}\d\uu &= \sideset{}'\sum_{|\ii |\leq N}\frac{\varphi(\ii h)}{r(\ii h)}h^2 + h\sum_{(\mu,\nu)\in U(0,1)} \varphi(\mu h,\nu h)\,\tau^0_{\mu,\nu}\\ 
& -\frac{1}{2h}\sum_{(\mu,\nu)\in U(2,2)}\varphi(\mu h,\nu h)\,\big(r(\mu h,\nu h)^2-Q(\mu h,\nu h)\big)\,\tau^1_{\mu,\nu}\\ 
& + \frac{3}{8h^3}\sum_{(\mu,\nu)\in U(3,3)}\varphi(\mu h,\nu h)\,\big(r(\mu h,\nu h)^2-Q(\mu h,\nu h)\big)^2\,\tau^2_{\mu,\nu}+O(h^5).
\end{aligned}
\label{eq:O(h^5)quad_slp_nogeodiv}
\end{equation}

In general, a $O(h^P)$ trapezoidal rule for the SLP is given by
\begin{equation}
\begin{aligned}
\int_{|\uu|\leq a}\frac{\varphi(\uu)}{r(\uu)}\d\uu &= \sideset{}'\sum_{|\ii |\leq N}\frac{\varphi(\ii h)}{r(\ii h)}h^2 + C_1^h[\varphi;P,0]+O(h^P)
\end{aligned}
\label{eq:O(h^2M+3)quad_slp_nogeodiv}
\end{equation}
where the local correction $C_1^h[\varphi;P,0]$ from \eqref{eq:zetatrap_general3d_corr} is given by
\begin{equation}
C_1^h[\varphi;P,0] = \sum_{m=0}^{2\lceil\frac{P-3}{2}\rceil}\quad\sum_{U(\lceil\frac{3m}{2}\rceil,\lceil\frac{P-3}{2}\rceil+m)}\binom{-\frac{1}{2}}{m}\,\tilde{\varphi}_m(\mu h,\nu h)\,\tau^m_{\mu,\nu}\,h^{1-2m}
\label{eq:correction_terms_lap3d_slp}
\end{equation}
where $\tau^m_{\mu,\nu}$ are constructed on $U(\lceil\frac{3m}{2}\rceil,\lceil\frac{P-3}{2}\rceil+m)$ by solving \eqref{eq:moment_fit_1}--\eqref{eq:moment_fit_2}. The total stencil $U(0,3\left\lceil\frac{P-3}{2}\right\rceil)$ are shown in the top row of Figure \ref{fig:stencil_lap}.

\paragraph{Example 2 (hypersingular Laplace)} Consider the Laplace hypersingular boundary integral operator
\begin{equation}
    \cH[\sigma](\xx)=\frac{1}{4\pi}\int_\Gamma \left(\frac{\nn_\mathbf{0}\cdot\nn(\uu)}{r^3}-\frac{3\mu_\mathbf{0}\,\mu(\uu)}{r^5}\right)\sigma(\yy)\,\d s_\yy
    \label{eq:lap3d_hyper}
\end{equation}
where, again, we have assumed $\yy=\rr(\uu)$ and $\xx=\rr(\mathbf{0})$, so $\nn(\uu)\equiv\nn_\yy, \nn_\mathbf{0}\equiv\nn_\xx$, and $\mu(\uu):=(\xx-\yy)\cdot\nn_\yy, \mu_\mathbf{0}:=(\xx-\yy)\cdot\nn_\xx$.
To derive the quadrature formula for \eqref{eq:lap3d_hyper}, rewrite it as
\begin{equation}
    \cH[\sigma](\xx) = \int_{|\uu|\leq a}\left(\frac{\varphi(\uu)}{r(\uu)^3}+\frac{\psi(\uu)}{r(\uu)^5}\right)\d\uu
    \label{eq:lap3d_hyper_rewrite}
\end{equation}
where $\varphi(\uu) = (\nn_\mathbf{0}\cdot\nn(\uu))J(\uu)\sigma(\uu)/(4\pi)$ and $\psi(\uu) = -3\mu_\mathbf{0}\mu(\uu)J(\uu)\sigma(\uu)/(4\pi)$ are smooth; note that $\varphi(\uu) = O(1)$ and $\psi(\uu)=O(\uu^4)$ (because $\mu_\mathbf{0}$ and $\mu(\uu)$ are both $O(\uu^2)$). Then applying \eqref{eq:zetatrap_general3d} to the two components in \eqref{eq:lap3d_hyper_rewrite} with $p=3, q=0$ and $p=5, q=2$, respectively, gives a $O(h^P)$ trapezoidal rule for the hypersingular \eqref{eq:lap3d_hyper}
\begin{equation}
\begin{aligned}
\int_{|\uu|\leq a}\left(\frac{\varphi(\uu)}{r(\uu)^3}+\frac{\psi(\uu)}{r(\uu)^5}\right)\d\uu &= \sideset{}'\sum_{|\ii |\leq N}\left(\frac{\varphi(\ii h)}{r(\ii h)^3} +\frac{\psi(\ii h)}{r(\ii h)^5}\right)h^2\\
&\quad + C_3^h[\varphi;P,0] + C_5^h[\psi;P,2] + O(h^P)
\end{aligned}
\label{eq:zetatrap_lap3d_hyper}
\end{equation}
where the correction terms are given by 
\begin{equation}
\begin{aligned}
C_3^h[\varphi;P,0] &= \sum_{m=0}^{2\lceil\frac{P-1}{2}\rceil}\quad\sum_{U(\lceil\frac{3m}{2}\rceil,\lceil\frac{P-1}{2}\rceil+m)}\binom{-\frac{3}{2}}{m}\tilde{\varphi}_m(\mu h,\nu h)\,\tau^m_{\mu,\nu}\,h^{-1-2m},\\
C_5^h[\psi;P,2] &= \sum_{n=0}^{2\lceil\frac{P-3}{2}\rceil}\quad\sum_{U(\lceil\frac{3m}{2}\rceil+2,\lceil\frac{P+1}{2}\rceil+m)}\binom{-\frac{5}{2}}{m}\tilde{\psi}_m(\mu h,\nu h)\,\tau^m_{\mu,\nu}\,h^{-3-2m},
\end{aligned}
\label{eq:correction_terms_lap3d_hyp}
\end{equation}
where $\tilde{\varphi}_m(\uu)=[r(\uu)^2-Q(\uu)]^m\varphi(\uu)$ and $\tilde{\psi}_m(\uu)=[r(\uu)^2-Q(\uu)]^m\psi(\uu)$, and where $\tau^m_{\mu,\nu}$ are constructed on the corresponding stencils by solving \eqref{eq:moment_fit_1}--\eqref{eq:moment_fit_2}.

\paragraph{Example 3 (Helmholtz SLP)} The Helmholtz SLP has the form
\begin{equation}
\cS_\kappa[\sigma](\xx) = \int_\Gamma \frac{e^{i\kappa r}}{4\pi r}\sigma(\yy)\,\d S_\yy \equiv  \int_R \frac{\varphi(\uu)}{r(\uu)}\,\d\uu \equiv  \int_R \frac{\varphi^r(\uu)+i\,\varphi^i(\uu)}{r(\uu)}\,\d\uu
\end{equation}
where the real and imaginary parts of $\varphi(\uu)  :=  e^{i\kappa\,r(\uu)}J(\uu)\sigma(\uu)/(4\pi)$ are
\begin{equation}
\varphi^r(\uu) := \cos(\kappa\,r(\uu))J(\uu)\sigma(\uu)/(4\pi),\qquad \varphi^i(\uu) := \sin(\kappa\,r(\uu))J(\uu)\sigma(\uu)/(4\pi).
\label{eq:helmslp_real_imag}
\end{equation}
Notice that the imaginary part $\varphi^i(\uu)/r(\uu)$ is smooth and with a diagonal limit
\begin{equation}
\varphi^i_0:=\lim_{\uu\to\mathbf{0}} \frac{i\,\varphi^i(\uu)}{r(\uu)} = \lim_{\uu\to\mathbf{0}}\frac{i\sin(\kappa\,r(\uu))}{4\pi\,r(\uu)}J(\uu)\sigma(\uu) = \frac{i\kappa}{4\pi}J(\mathbf{0})\sigma(\mathbf{0})
\label{eq:helmholtz_diag_lim}
\end{equation}
so one only needs to correct the quadrature error from the real part. Applying the ordinary trapezoidal rule to the imaginary part and the unified zeta quadrature to the real part, we obtain an $O(h^P)$ trapezoidal rule for the Helmholtz SLP
\begin{equation}
\begin{aligned}
\int_{|\uu|\leq a}\frac{\varphi(\uu)}{r(\uu)}\d\uu &= \sideset{}'\sum_{|\ii |\leq N}\frac{\varphi(\ii h)}{r(\ii h)}h^2 + \varphi^i_0h^2 +  C_1^h[\varphi^r;P,0]+O(h^P)
\end{aligned}
\label{eq:O(h^2M+3)quad_helmslp_nogeodiv}
\end{equation}
where $\varphi^i_0$ is given by \eqref{eq:helmholtz_diag_lim}, and where
\begin{equation}
C_1^h[\varphi^r;P,0] = \sum_{m=0}^{2\lceil\frac{P-3}{2}\rceil}\quad\sum_{U(\lceil\frac{3m}{2}\rceil,\lceil\frac{P-3}{2}\rceil+m)}\binom{-\frac{1}{2}}{m}\tilde{\varphi}^r_m(\mu h,\nu h)\,\tau^m_{\mu,\nu}\,h^{1-2m},
\label{eq:correction_terms_helm3d_slp}
\end{equation}
which is almost exactly the same as \eqref{eq:correction_terms_lap3d_slp} for the Laplace SLP, except that $\tilde{\varphi}^r_m(\uu) := [r(\uu)^2-Q(\uu)]^m\varphi^r(\uu)$ with $\varphi^r$ from \eqref{eq:helmslp_real_imag}.

\paragraph{Example 4} The derivations in Examples 1, 2 and 3 can be directly generalized to all common boundary integral operators associated with Laplace, Helmholtz, Stokes, and Maxwell equations. All of these operators consist of integrals of the form
$$\int\frac{\varphi(\uu)}{r(\uu)^p}\,\d\uu,$$
for which high-order trapezoidal rules can be constructed using the unified quadrature \eqref{eq:zetatrap_general3d}; in particular, the numerator $\varphi(\uu)$ can be a tensor function (e.g., Stokes and Maxwell), then one only needs to apply Theorem \ref{thm:unified_zeta3d} to each tensor component. We omit the repetitive derivations for these integral operators and will only present numerical results in the next section.

Figure \ref{fig:stencil_lap} shows the local stencils for the $O(h^3)$ to $O(h^9)$ trapezoidal rule corrections for the Laplace SLP and DLP. The stencils for the Laplace hypersingular operator \eqref{eq:lap3d_hyper} are identical to those for the SLP in the top row of Figure \ref{fig:stencil_lap}, except that the orders of accuracy achieved are $O(h^1)$ to $O(h^7)$ instead. We mention that these stencils are bigger than those used in \cite{wu2021corrected} because no information of the higher derivatives of the geometric parameterization is used.
    
%ffffffffffffffffff
\begin{figure}[htbp]
\centering
\includegraphics[width=\textwidth]{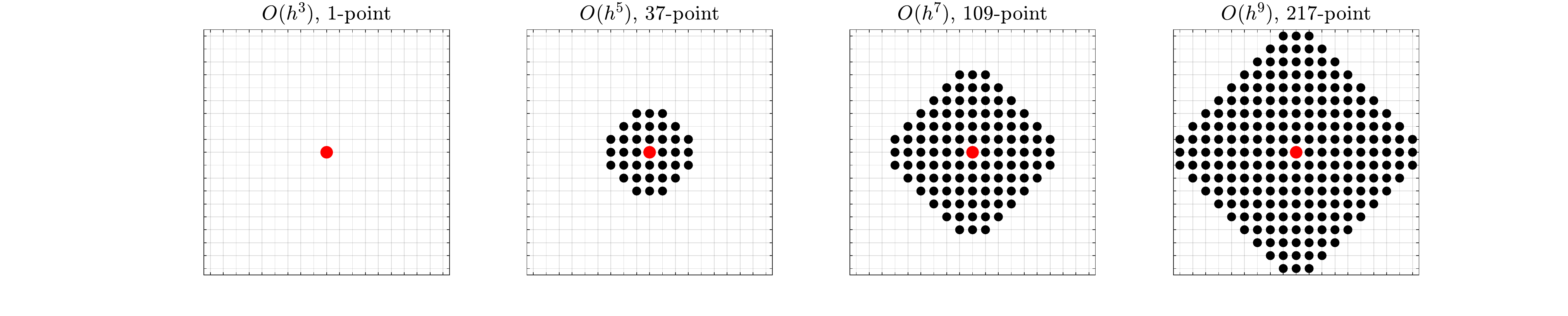}\\
\includegraphics[width=\textwidth]{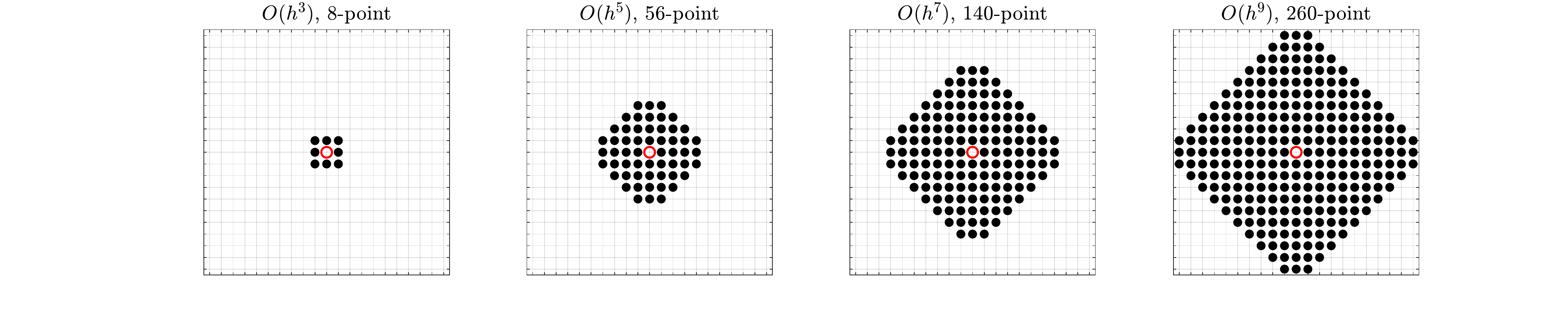}
\caption{$O(h^3)$ to $O(h^9)$ correction stencils for the Laplace SLP (top row) and DLP (bottom row), where a circle means that the correction weight is zero there. The normal derivative of the SLP uses the same stencils as the DLP (bottom row) and achieves the same orders of accuracy. The normal derivative of the DLP (i.e., the hypersingular \eqref{eq:lap3d_hyper}) uses the same stencils as the SLP (top row), but the achieved orders of accuracy are 2 orders less in each case (i.e., from $O(h^1)$ to $O(h^7)$) }\label{fig:stencil_lap}
\end{figure}

\subsection{Numerical results}
%sssssssss

We now show numerical results for the convergence of the unified zeta quadrature. All examples in this section are computed in MATLAB R2020a on a 10-core 3.7 GHz Intel Core i9 desktop.

Figure \ref{fig:patchZetaConv3d_laphelm} shows the convergence of the unified zeta quadrature applied to the Laplace and Helmholtz layer potentials and their normal derivatives on a randomly generated quartic surface patch centered at $\mathbf{0}$ and parameterized over $[-0.5,0.5]^2$, which is discretized on a uniform $h$-mesh. The density function is also randomly generated using the formula
\begin{equation}
    \sigma(u, v) = (a \cos(a + u) + b \sin(b + v)) e^{-c(u^2 +v^2)^4}
    \label{fig:patchDensSamp}
\end{equation}
where $a, b$ are standard Gaussian random numbers, and where $c = 2400$ such that $\sigma$ is compactly supported (up to double-precision) on the patch. Note that for both Laplace and Helmholtz, the normal derivative of the DLP (denoted DLPn) is hypersingular whereas the other three potentials are weakly singular, thus with the same amount of work the order of convergence for the hypersingular potential is 2 less than the other potentials. The bigger absolute errors for the hypersingular potential can be attributed to the fact that the hypersingular operator is essentially a differential operator, whose conditioning scales as $O(1/h)$.

%ffffffffffffffffff
\begin{figure}[htbp]
\includegraphics[width=\textwidth]{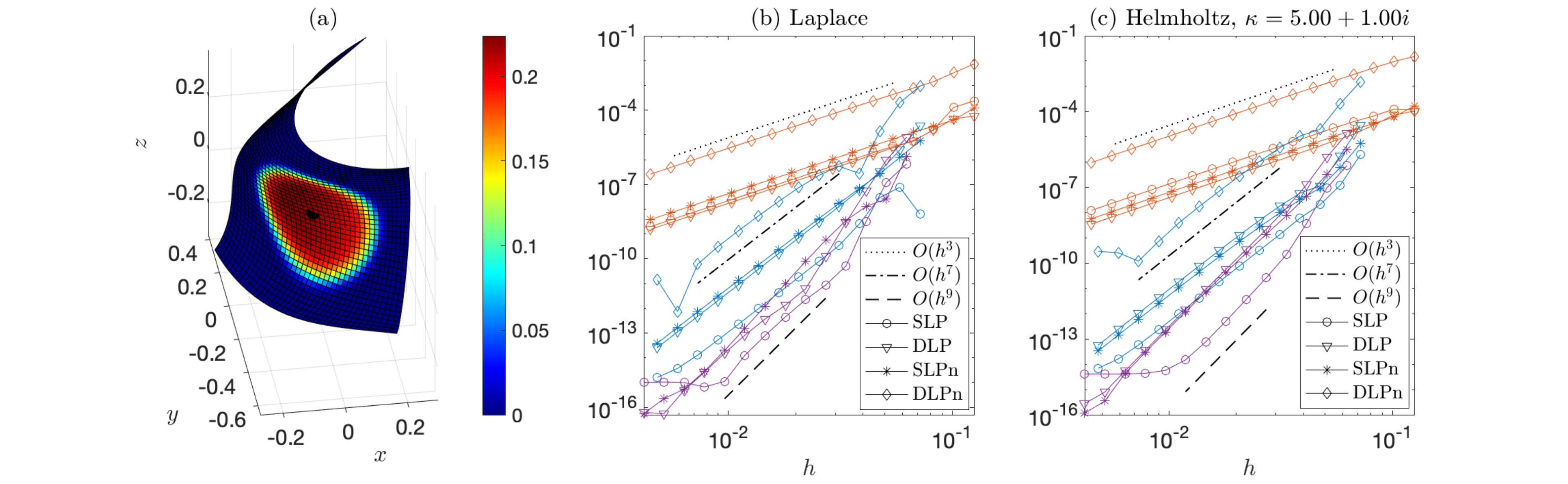}
\caption{(a) Evaluation of Laplace and Helmholtz potentials on a quartic surface patch using the unified zeta quadrature; target point located at the center (marked ``$\bullet$''), color represents the random density function \eqref{fig:patchDensSamp} with $a=0.22, b=-0.018$. Convergence against the grid size $h$ are shown in (b) for Laplace and (c) for Helmholtz. Convergence of $3^\text{rd}$ (red), $7^\text{th}$ (blue) and $9^\text{th}$ (purple) orders are shown for the SLP (circles), DLP (triangles), and the normal derivative of SLP (asterisks, denoted SLPn). Convergence up to $7^\text{th}$ order are shown for the normal derivative of DLP (diamonds, denoted DLPn), which is hypersingular. See Figure \ref{fig:stencil_lap} for the correction stencils used for these operators.}
\label{fig:patchZetaConv3d_laphelm}
\end{figure}

We next solve the Dirichlet and Neumann boundary value problems (BVPs) associated with the Laplace and Helmholtz equations exterior to a wobbly toroidal surface. The Laplace BVPs are reformulated as second kind integral equations based on potential theory \cite[\S6.4]{kress2014linear} and the Helmholtz BVPs as combined-field integral equations \cite[\S2]{bremer2015high}. All boundary integral equations are discretized using the Nystr\"om method with the unified zeta quadrature described in Section \ref{sc:unified_zeta_BIO3d}. The solution procedure is as follows: first the quadrature correction weights are pre-computed, then the integral equations are solved iteratively using GMRES with a tolerance $\epsilon_{\text{GMRES}}$; in each iteration, we first apply the punctured trapezoidal rule discretization of the Laplace/Helmholtz kernel using the FMM, then the pre-computed local correction weights are applied as a sparse matrix-vector multiplication. The overall computational cost will be $O(N)$ when $N$ discretization points are used.

The top row of Figure \ref{fig:bie3d_conv} shows the convergence of the relative errors and the timings for the solution of the Laplace BVPs. Convergence of $3^\text{rd}$ to $7^\text{th}$ orders are observed with the corresponding orders of quadrature corrections. Although higher-order correction weights require longer pre-computation times, the times for GMRES iterations are independent of the orders of the quadrature because the times to apply the sparse correction weights are negligible compared to applying the FMM; the overall solution times are clearly $O(N)$. Similarly, the bottom row of Figure \ref{fig:bie3d_conv} shows the results of solving the Helmholtz BVPs with the same geometry setup and using the standard combined-field integral equation formulation \cite{bremer2015high}, where the integral equation for the Neumann BVP is regularized based on the Caldr\'on projector theory \cite{hsiao2008boundary}. Quadrature corrections of up to $9^\text{th}$ order are applied and the corresponding orders of convergence are observed.

To test the quadrature on hypersingular operator, we solve the Helmholtz BVPs again without regularization. Table \ref{tbl:helm3d_bvps_stats} shows the numerical results of solving the Helmholtz BVPs on the same geometry as in Figure \ref{fig:bie3d_conv}. The integral equation for the Neumann problem is hypersingular whose condition number grows as $O(1/h)$ \cite[\S5.2]{sidi2014richardson}, so the number of GMRES iteration grows with $N$. It is possible to regularize the hypersingular BIE to improve its conditioning (see the numerical results in \cite{wu2021corrected}), but the condition number for the regularized BIE still grows with the wavenumber $\kappa$. On the other hand, the hypersingular BIE is more convenient for building Fast Direct Solvers which can be used as preconditioners, so it becomes more advantageous for problems with larger $\kappa$.
 
\begin{figure}[htbp]
    \centering
    \includegraphics[width=\textwidth]{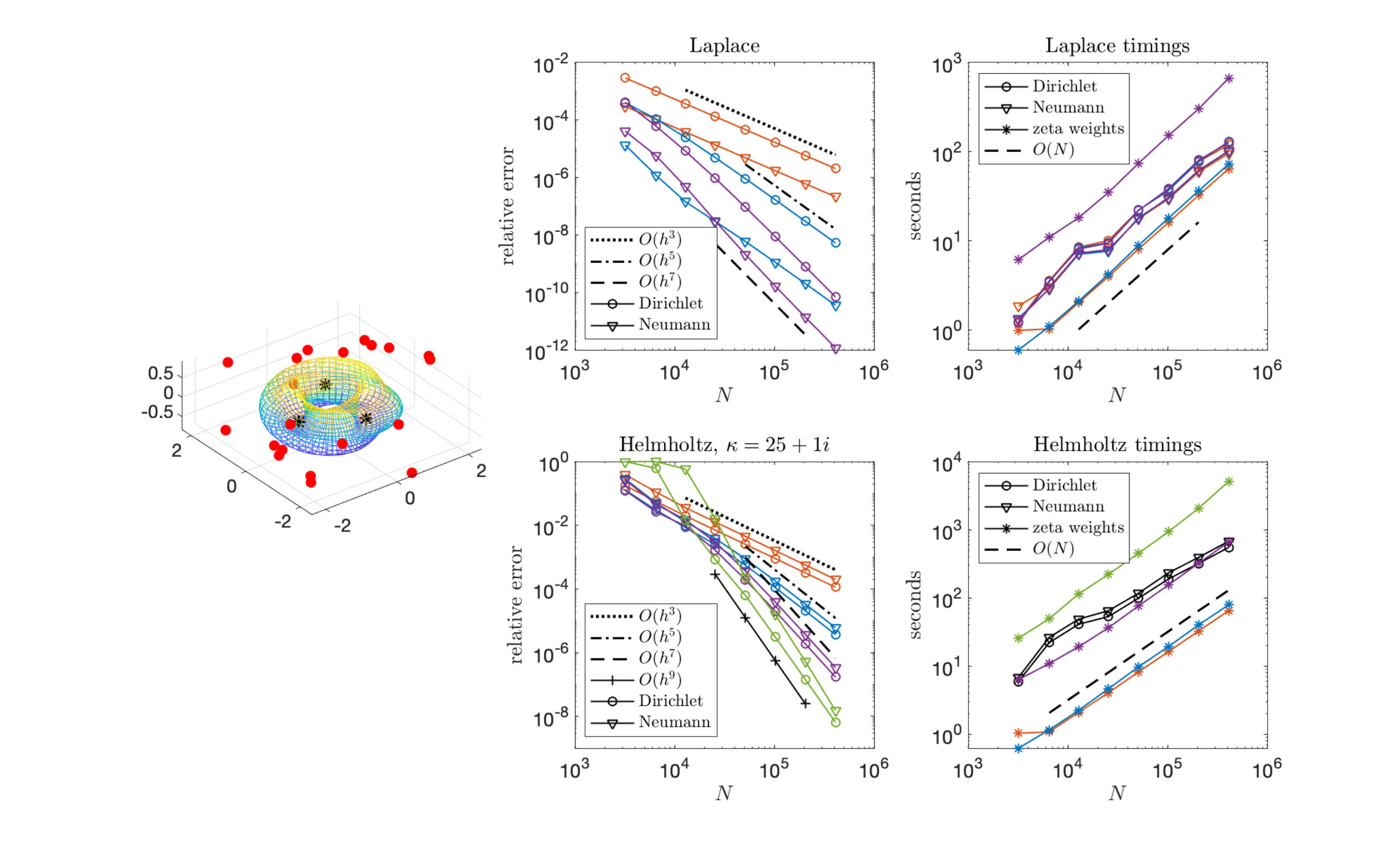}
    \caption{Convergence and timings for solving the Laplace (top row) and Helmholtz (bottom row) boundary value problems exterior to a wobbly torus using $N$ unknowns. The exact solutions are generated by three sources inside the torus (marked ``$\ast$'') and the max relative error is calculated at 20 target points outside the torus (marked ``$\bullet$''). Convergence of $3^\text{rd}$ (red), $5^\text{th}$ (blue), $7^\text{th}$ (purple)  and $9^\text{th}$ (green) orders are observed for both the Dirichlet (circles) and Neumann (triangles) problems, where the integral operators are discretized using the unified zeta quadrature of the corresponding order. The pre-computation times for the correction weights and for the iterative solves using GMRES and FMM are shown to scale linearly with $N$ in all cases.}
    \label{fig:bie3d_conv}
\end{figure} 

\begin{table}[htbp]
    \centering
    \def\arraystretch{1.2}
    \begin{tabular}{c|c|cccccc}
        \multicolumn{2}{c}{Helmholtz} & \multicolumn{6}{|c}{$3^\text{rd}$ order}\\
        \hline
        $N$ & $T_\text{wei}$ & $T_\text{iter}^\text{diri}$ & $N_\text{iter}^\text{diri}$ & $T_\text{iter}^\text{neu}$ & $N_\text{iter}^\text{neu}$ & $E^\text{diri}$ & $E^\text{neu}$ \\
        \hline
%        3200 	& 0.6 	& 5.19 	& 42 	& 10.27 	& 69 	& 5.5e-03 	& 2.3e-02 \\
%        6498 	& 1.1 	& 22.46 	& 38 	& 54.34 	& 94 	& 1.7e-03 	& 1.3e-02 \\
        12800 	& 2.1 	& 38.79 	& 37 	& 141.91 	& 131 	& 6.1e-04 	& 7.0e-03 \\
        25538 	& 4.2 	& 51.93 	& 37 	& 239.56 	& 168 	& 2.2e-04 	& 2.9e-03 \\
        51200 	& 8.8 	& 98.75 	& 37 	& 556.23 	& 205 	& 7.8e-05 	& 1.1e-03 \\
        102152 	& 17.4 	& 187.22 	& 37 	& 1307.74 	& 246 	& 2.8e-05 	& 4.1e-04 \\
        204800 	& 35.3 	& 322.44 	& 37 	& 2669.33 	& 293 	& 9.9e-06 	& 1.5e-04 \\
        410418 	& 69.9 	& 550.15 	& 37 	& 5495.41 	& 348 	& 3.5e-06 	& 5.3e-05 \\
        \hline
        \multicolumn{2}{c}{} & \multicolumn{6}{|c}{$5^\text{th}$ order}\\
        \hline
        $N$ & $T_\text{wei}$ & $T_\text{iter}^\text{diri}$ & $N_\text{iter}^\text{diri}$ & $T_\text{iter}^\text{neu}$ & $N_\text{iter}^\text{neu}$ & $E^\text{diri}$ & $E^\text{neu}$ \\
        \hline
%        3200 	& 6.3 	& 5.11 	& 42 	& 12.16 	& 79 	& 3.9e-03 	& 2.3e-02 \\
%        6498 	& 10.6 	& 21.60 	& 38 	& 62.56 	& 107 	& 9.3e-04 	& 1.2e-02 \\
        12800 	& 19.7 	& 40.03 	& 37 	& 162.99 	& 148 	& 2.6e-04 	& 4.3e-03 \\
        25538 	& 35.4 	& 53.87 	& 37 	& 272.63 	& 183 	& 7.8e-05 	& 1.1e-03 \\
        51200 	& 74.9 	& 100.45 	& 37 	& 621.05 	& 219 	& 1.7e-05 	& 2.2e-04 \\
        102152 	& 156.8 	& 195.31 	& 37 	& 1447.22 	& 259 	& 3.4e-06 	& 4.3e-05 \\
        204800 	& 318.8 	& 339.03 	& 37 	& 2972.25 	& 307 	& 6.3e-07 	& 8.0e-06 \\
        410418 	& 671.4 	& 582.13 	& 37 	& 6098.52 	& 363 	& 1.1e-07 	& 1.5e-06 \\
        \hline
        \multicolumn{2}{c}{} & \multicolumn{6}{|c}{$7^\text{th}$ order}\\
        \hline
        $N$ & $T_\text{wei}$ & $T_\text{iter}^\text{diri}$ & $N_\text{iter}^\text{diri}$ & $T_\text{iter}^\text{neu}$ & $N_\text{iter}^\text{neu}$ & $E^\text{diri}$ & $E^\text{neu}$ \\
        \hline
%        3200 	& 27.6 	& 5.58 	& 44 	& 105.15 	& 600 	& 3.8e-03 	& 6.2e-02 \\
%        6498 	& 50.5 	& 22.31 	& 38 	& 371.89 	& 600 	& 8.1e-04 	& 2.9e-02 \\
        12800 	& 112.2 	& 40.71 	& 37 	& 692.92 	& 600 	& 3.0e-04 	& 1.3e-02 \\
        25538 	& 212.4 	& 56.06 	& 37 	& 628.53 	& 395 	& 5.1e-05 	& 8.3e-04 \\
        51200 	& 453.2 	& 106.37 	& 37 	& 685.95 	& 228 	& 6.0e-06 	& 6.4e-05 \\
        102152 	& 921.5 	& 205.53 	& 37 	& 1579.91 	& 268 	& 6.2e-07 	& 6.1e-06 \\
        204800 	& 2138.3 	& 362.00 	& 37 	& 3279.47 	& 317 	& 5.8e-08 	& 6.2e-07 \\
        410418 	& 5227.4 	& 624.81 	& 37 	& 6801.70 	& 376 	& 5.3e-09 	& 6.0e-08
    \end{tabular}
    \caption{Convergence of solving the boundary value problems associated with the Helmholtz equation exterior to a wobbly torus (see Figure \ref{fig:bie3d_conv}) and with wavenumber $\kappa=25 + 1i$ ($12$ wavelengths across the geometry). The standard combined-field integral equation formulation is used with no regularization, thus the BIE for the Neumann problem is hypersingular. The superscripts ``diri'' and ``neu'' indicates Dirichlet and Neumann problems, respectively. $T_\text{wei}$ is the time for pre-computing the quadrature correction weights $\tau_{\mu,\nu}$ for all the involved integral operators: the DLP, SLP and their normal derivatives. $N_\text{iter}$ is the number of GMRES iterations and $T_\text{iter}$ is the total time for GMRES to converge to a residual of $10^{-12}$; each iteration consists of applying the punctured trapezoidal rule via FMM and applying the correction weights. $E$ in the last two columns denotes the relative $\infty$-norm error.}
    \label{tbl:helm3d_bvps_stats}
\end{table}

%ssssssssssssssssss
\section{Conclusion}
\label{sc:conclusion}

We have developed a new trapezoidal quadrature method for singular and hypersingular integral operators on curved surfaces in 2D and 3D. The quadrature method is a generalization of \cite{wu2021zeta} in 2D and of \cite{wu2021corrected} in 3D. In particular, the quadrature in 3D is based on a generalized Euler-Maclaurin formula (Theorem \ref{thm:extendedEM2D}) that provides an error expansion for a class of singular integrals in $\RR^2$; these errors are then fitted on a local stencil by a systematic moment-fitting approach. Compared to \cite{wu2021corrected}, this new quadrature significantly simplifies the derivation and construction of high-order accurate quadrature rules, and can be easily applied to all of the common integral operators for elliptic PDEs. In addition, the new quadrature also applies to hypersingular integral operators, which can be used to construct Fast Direct Solvers in a straightforward manner. On the other hand, a new algorithm is proposed (Appendix \ref{app:compute_epstein}) to compute the parametric derivatives of the Epstein zeta function to very high order, which is a core component for computing the high-order error expansions for our quadrature.

We have implemented up to $9^\text{th}$ order quadrature corrections for weakly singular operators and up to $7^\text{th}$ order for hypersingular operators in 3D. Our quadrature method is shown to be highly compatible with fast algorithms such as the FMM, achieving an $O(N)$ overall computational complexity when solving Laplace or Helmholtz BVPs on a curved surface.

Codes that accompany this paper are available on GitHub at the following repositories. 
\begin{center}
    \url{https://github.com/bobbielf2/ZetaTrap2D}\\
    \url{https://github.com/bobbielf2/ZetaTrap3D_Unified}
\end{center}

We propose two directions for future investigation. Firstly, the current quadrature method is restricted to surfaces that can be smoothly parameterized on a rectangle, such as a deformed torus. But one can potentially handle more general surfaces by combining our method with appropriate domain decomposition techniques, such as the partition-of-unity approach of \cite{bruno2001fast}. Secondly, when the target point is off but close to the surface, the BIOs become \emph{nearly singular}. It is possible to generalize our method to develop near-singular quadrature using a similar error correction approach, such as the recent development in \cite{nitsche2021evaluation} for line integrals. We will report on our investigation in these directions in the future.

\appendix

\section{Zeta quadratures for the Stokes potentials}
\label{app:sto2d}
The Stokes single- and double-layer velocity, pressure, and traction kernels have different types of singularity (see e.g. \cite{wu2020solution} for the definitions), which can be corrected using the zeta quadratures similar to the Laplace and Helmholtz layer potentials. Figure \ref{fig:sto2d_conv} shows the convergence of $16^\text{th}$-order zeta quadratures applied to all six 2D Stokes layer potentials evaluated at a target point located on the source curve. Details of derivations of formulae are omitted; codes for generating the figure is available on GitHub (link included in Section \ref{sc:conclusion}).

\begin{figure}[htbp]
\centering
\includegraphics[width=\textwidth]{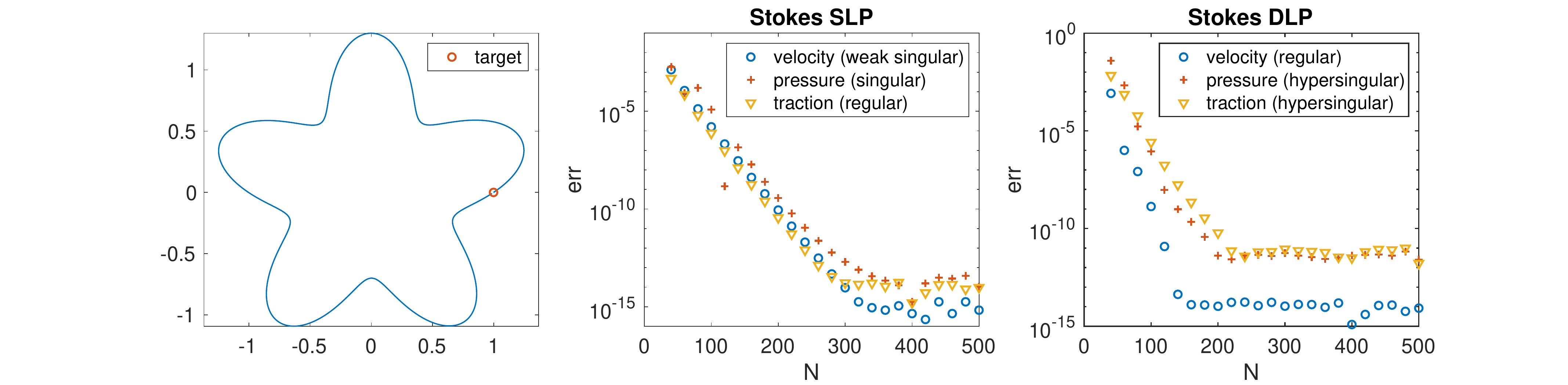}
\caption{Convergence of $16^\text{th}$-order zeta quadratures applied to the self-evaluations of the Stokes single- and double-layer velocity, pressure and traction on a smooth curve.}
\label{fig:sto2d_conv}
\end{figure}

\section{Computing the derivatives of the Epstein zeta functions}
\label{app:compute_epstein}

The computation of the Wigner limits requires higher derivatives of the Epstein zeta functions $Z(s;E,F,G)$ with respect to the parameters $E,F,$ and $G$. In this section we present the computational procedure for evaluating the Epstein zeta functions and their parametric derivatives.

We use $\sum_{i,j}'$ to denote the summation over the lattice $(i,j)\in\ZZ^2 - \{\mathbf{0}\}$. For any positive definite quadratic form $Q(u,v)\equiv Q(u,v; E,F,G) = Eu^2+2Fuv+Gv^2$, assume that the determinant $D := EG-F^2 = 1$. Then the analytic continuation of Epstein zeta function $Z(s;Q) \equiv Z(s;E,F,G) = \sum_{i,j}' Q(i,j)^{-\frac{s}{2}}$ from $\mathrm{Re}\,s>2$ to the whole complex plane (except a simple pole at $s = 2$) is given by the following integral representation \cite[Eq.(1.2.8),(1.2.11)]{borwein2013lattice}:
\begin{equation}
\begin{aligned}
\pi^{-s_1}\Gamma(s_1)Z(s) = -\frac{1}{s_1} - \frac{1}{s_2} + \int_1^\infty\mathrm{d}t\cdot t^{s_1-1}\sum_{i,j}{}'e^{-\pi Q(i,j) t}\\
+\int_1^\infty\mathrm{d}t\cdot t^{s_2-1}\sum_{i,j}{}'e^{-\pi \overline{Q}(i,j) t},
\end{aligned}
\label{eq:epstein_det1}
\end{equation}
where $s_1:=s/2$, $s_2:=1-s_1$, and $\overline{Q}(u,v):=Gu^2-2Fuv+Ev^2$. In the cases where the determinant $D\neq1$, one can first rescale $E,F,G$ by a factor of $1/\sqrt{D}$ when evaluating $Z(s)$ using the above formula, and then scale the final result by $D^{-s/4}$, therefore for a general quadratic form $Q(u,v)$ with a determinant $D$, the evaluation of $Z(s;Q)$ is given by
\begin{equation}
Z(s;E,F,G) = \frac{1}{D^{s/4}}Z(s; \tfrac{E}{\sqrt{D}},\tfrac{F}{\sqrt{D}},\tfrac{G}{\sqrt{D}})
\label{eq:epstein_scale}
\end{equation}
where the $Z(s)$ on the right-hand side is evaluated using the formula \eqref{eq:epstein_det1}.

Due to rotational symmetry, the sum involving $\overline{Q}(i,i)$ in the second integral of \eqref{eq:epstein_det1} satisfies
$$\sum_{i,j}{}'e^{-\pi \overline{Q}(i,j) t} = \sum_{i,j}{}'e^{-\pi Q(i,j) t}.$$
Therefore, combining with the formulae \eqref{eq:epstein_det1} and \eqref{eq:epstein_scale}, an expression for $Z(s)=Z(s;E,F,G)$, with a general determinant $D=EG-F^2\neq1$, is given by
\begin{equation}
\boxed{Z(s;Q) = C(s_1)\left(-\frac{1}{s_1s_2} + \sump_{i,j}\cG(\tilde{Q};s)\right)}
\label{eq:epstein_anal}
\end{equation}
where $s_1=s/2$, $s_2=1-s_1$, where the scaling factor $C(s_1)$ and the scaled quadratic form $\tilde{Q}$ are defined as
\begin{equation}
C(s_1)\equiv C(s_1;D):=\frac{\Gamma(s_1)}{(\pi\sqrt{D})^{s_1}},\qquad \tilde{Q}\equiv \tilde{Q}(i,j):= \frac{\pi Q(i,j)}{\sqrt{D}},
\label{eq:Cs_tQ}
\end{equation}
and where $\cG(x;s)$ a ``combined'' incomplete gamma function defined as
\begin{equation}
\cG(x)\equiv \cG(x;s):= \Gamma(s_1,x)x^{-s_1} + \Gamma(s_2,x)x^{-s_2} \equiv \int_1^\infty t^{s_1-1}e^{-xt}\,\d t + \int_1^\infty t^{s_2-1}e^{-xt}\,\d t
\label{eq:my_mod_gamma}
\end{equation}

Because $\cG(x;s)$ decays exponentially with $x$, the expression \eqref{eq:epstein_anal} gives a fast converging formula for evaluating $Z(s)$ when the sum therein is truncated appropriately based on the given tolerance; details can be found in \cite{wu2021corrected}.

\subsection{Mixed derivatives of the Epstein zeta functions}

Define the $k$-th derivative operator associated with coefficients $L, M, N$ as
\begin{equation}
\square^k \equiv \square^k_{(L,M,N)} := \left(L\frac{\partial}{\partial E} + M\frac{\partial}{\partial F}+N\frac{\partial}{\partial G}\right)^k \equiv (L\partial_E + M\partial_F+N\partial_G)^k
\end{equation}
we are interested in computing  $Z^{(k)}(s)\equiv \square^k Z(s)$ by differentiating \eqref{eq:epstein_anal}. We list the major steps for the derivation of the formulae for $Z^{(k)}(s)$ as follows.
\begin{itemize}
\item Because computing the derivatives of $Z(s)$ requires higher-order chain rules (e.g. the Fa\`a di Bruno's formula), we introduce the partial Bell polynomials
\begin{equation}
B_{n,m}(x_1,...,x_{n-m+1}) := \sum\frac{n!}{j_1!j_2!\cdots j_{n-m+1}!}
\left(\frac{x_1}{1!}\right)^{j_1}\left(\frac{x_2}{2!}\right)^{j_2}\cdots\left(\frac{x_{n-m+1}}{(n-m+1)!}\right)^{j_{n-m+1}}
\label{eq:bell_poly}
\end{equation}
where the sum is over all non-negative $j_1,\dots,j_{n-m+1}$ such that
\begin{align*}
    &j_1 + j_2 + \cdots + j_{n-m+1} = m,\\
    &j_1 + 2 j_2 + 3 j_3 + \cdots + (n-m+1)j_{n-m+1} = n.
\end{align*}
The Bell polynomials allow easier representations of higher-derivatives of composite functions. The following recurrence relation is used to compute $B_{n,m}$ given $x_1,\dots,x_{n-m+1}$
\begin{equation}
B_{n,m} = \sum_{i=1}^{n-m+1} \binom{n-1}{i-1} x_i B_{n-i,m-1}
\end{equation}
See \cite{johnson2002curious} for more details of the Fa\`a di Bruno's formula and Bell polynomials.
\item First consider the scaling factor $C(s_1)$ in \eqref{eq:epstein_anal}. Note that
\begin{equation}
\square C(s_1)\equiv  \square_{(L,M,N)} C(s_1;D) = -s_1H C(s_1)
\end{equation}
where the coefficient $H$ is defined as
\begin{equation}
H\equiv H(D;L,M,N):=\frac{GL+EN-2FM}{2D}.
\end{equation}
The derivatives $H^{(k)}\equiv\square^kH$ can be computed using the following recurrence relation
\begin{equation}
\begin{aligned}
    H^{(k)} &= -2(k-1)HH^{(k-1)}-(k-1)(k-2)KH^{(k-2)},\qquad k\geq2\\
    H^{(0)} &= H,\qquad H^{(1)}= -2H^2+K,
\end{aligned}
\label{eq:Hk}
\end{equation}
where $K\equiv K(D;L,M,N) :=  (LN-M^2)/D$.
\item If we denote $\tH = s_1H$, since $(\square+\tH)C(s_1) \equiv 0$, we have
\begin{equation}
(\square+\tH)\Big(C(s_1)\,f(E,F,G)\Big) = C(s_1)\,\square f(E,F,G)
\end{equation}
holds for any function $f$ that depends on $E,F,$ and $G$. In particular, when applied to the formula \eqref{eq:epstein_anal}, we have
\begin{equation}
(\square+\tH)^{k}Z(s) = C(s_1)\sump \cG^{(k)} \qquad k\geq1
\label{eq:dZ_simplified}
\end{equation}
where $\cG^{(k)}\equiv\square^k\cG(\tilde{Q})$.
\item To find the expressions for the derivatives $\cG^{(k)}$, define $\cG_k$, a shifted version of the combined incomplete gamma function \eqref{eq:my_mod_gamma}, as
\begin{equation}
\cG_k(x):= \Gamma(s_1+k,x)x^{-(s_1+k)} + \Gamma(s_2+k,x)x^{-(s_2+k)},
\label{eq:my_mod_gamma_k}
\end{equation}
so $\cG_0(x) \equiv \cG(x)$. Then using identities for the incomplete gamma function, we have $\cG_k'(x) = -\cG_{k+1}(x)$, thus by induction
\begin{equation}
\frac{d^i}{dx^i} \cG(x) = (-1)^i \cG_i(x),\qquad i = 0,1,2,\dots
\end{equation}
Then by the chain rule
\begin{equation}
\cG^{(k)} \equiv \square^k\cG(\tilde{Q}) = \sum_{i=1}^k (-1)^i\cG_i\,B_{k,i}(\tilde{Q}^{(1)},\tilde{Q}^{(2)},\dots,\tilde{Q}^{(k-i+1)})
\label{eq:Gk}
\end{equation}
where $\tQ^{(k)}\equiv \square^k\tQ$, and where $B_{k,i}$ are the partial Bell polynomials \eqref{eq:bell_poly}.
\item To find $\tilde{Q}^{(k)}\equiv\square^k_{(L,M,N)}\tilde{Q}$, define an auxiliary quadratic form $R(i,j)$ and its scaled form $\tilde{R}$ as follows
\begin{equation}
R(i,j)\equiv R(i,j; L,M,N) := Li^2+2Mij+Nj^2,\qquad \tilde{R}\equiv \tilde{R}(i,j):=\frac{\pi R(i,j)}{\sqrt{D}},
\end{equation}
then
\begin{equation}
\tilde{Q}^{(k)} = [(\square-H)^k\cdot1]\,\tilde{Q} + k\,[(\square-H)^{k-1}\cdot1]\,\tilde{R},
\label{eq:Qk}
\end{equation}
where the $1$'s in the square brackets represent constant functions.
\item We can expand the $(\square+\tH)^{k}Z(s)$ in \eqref{eq:dZ_simplified} using the chain rule, which gives
\begin{equation}
(\square+\tH)^{k}Z(s) = Z^{(k)}(s) + \sum_{i=0}^{k-1}\binom{n}{i}[(\square + \tH)^{k-i}\cdot1]Z^{(i)}(s),\qquad k = 1,2,\dots
\label{eq:Zk_expan}
\end{equation}
\item Finally, (\ref{eq:Qk},\ref{eq:Zk_expan}) require computing $(\square - H)^k\cdot1$ and $(\square + \tH)^k\cdot1\equiv(\square + s_1H)^k\cdot1$, which using the chain rule are given by the following formulae
\begin{equation}
\begin{aligned}
(\square - H)^k\cdot1 &= \sum_{i=1}^k(-1)^iB_{k,i}(H, H^{(1)},H^{(2)},\dots,H^{(k-i)})\\
(\square + s_1H)^k\cdot1 &= \sum_{i=1}^ks_1^iB_{k,i}(H, H^{(1)},H^{(2)},\dots,H^{(k-i)})
\end{aligned}
\label{eq:sqH}
\end{equation}
where $B_{k,i}$ are the partial Bell polynomials \eqref{eq:bell_poly}.
\end{itemize}

We are now ready to state the formulae for evaluating $Z^{(k)}(s)$. Combining \eqref{eq:dZ_simplified} and \eqref{eq:Zk_expan} we have the recursion relation
\begin{equation}
\boxed{
\begin{aligned}
Z^{(0)}(s) &\equiv Z(s) = C(s_1)\left(-\frac{1}{s_1s_2} + \sump_{i,j}\cG(\tilde{Q};s)\right) & s_1=\frac{s}{2}, s_2=1-s_1\\
Z^{(k)}(s) &= C(s_1)\sump_{i,j} \cG^{(k)} - \sum_{i=0}^{k-1}\binom{n}{i}[(\square + s_1H)^{k-i}\cdot1]Z^{(i)}(s), &k\geq1
\end{aligned}
}
\end{equation}
where $\cG$, $\cG^{(k)}$, and other required quantities, such as $(\square + s_1H)^{k}\cdot1$, can be evaluated using the formulae (\ref{eq:Cs_tQ}, \ref{eq:my_mod_gamma}, \ref{eq:my_mod_gamma_k}, \ref{eq:Gk}, \ref{eq:Qk}, \ref{eq:sqH}).

%sssssss
\subsection{Computing all parametric partial derivatives of the Epstein zeta}
Let $\pd_E := \frac{\pd}{\pd E}, \pd_F := \frac{\pd}{2\pd F}, \pd_G := \frac{\pd}{\pd G}$, we want the $n$-th derivatives $D^nZ$, $n=1,\dots,7$, where
\begin{align*}
D^1 &= \begin{Bmatrix}\pd_E, & \pd_F, & \pd_G\end{Bmatrix}\\
D^2 &= \begin{Bmatrix}\pd_E^2, & \pd_E\pd_F, & \pd_F^2, & \pd_F\pd_G, & \pd_G^2\end{Bmatrix}\\
D^3 &= \begin{Bmatrix}\pd_E^3, & \pd_E^2\pd_F, & \pd_E\pd_F^2, & \pd_F^3, & \pd_F^2\pd_G, & \pd_F\pd_G^2, & \pd_G^3\end{Bmatrix}\\
&\vdots\\
D^7 &= \begin{Bmatrix}\pd_E^7, & \pd_E^6\pd_F, & \dots, & \pd_F^7, & \dots, & \pd_F\pd_G^6, & \pd_G^7\end{Bmatrix}
\end{align*}
But the scheme in the previous section only computes the mixed derivatives $(a\pd_E+b\pd_F+c\pd_G)^nZ$ for any constants $a,b,c$. We will need a procedure to find all the partial derivatives using appropriate mixed derivatives. Since $\pd_E^{n-k}\pd_F^k$ and $\pd_G^{n-k}\pd_F^k$ are symmetric, a procedure for finding $\pd_E^{n-k}\pd_F^k, k=0,1,\dots,n$, will also work for finding $\pd_G^{n-k}\pd_F^k$. Here is the procedure for computing $\pd_E^{n-k}\pd_F^k$.
\begin{enumerate}
\item Let $a_m = \cos(\tfrac{m\pi}{2n})$ and $b_m = \sin(\tfrac{m\pi}{2n})$
\item Evaluate $c_m = (a_m\pd_E+b_m\pd_F)^nZ,\; m = 0,1,\dots,n$
\item We can now solve the system of $n+1$ equations
\begin{equation}
\sum_{k=0}^n \binom{n}{k}a_m^{n-k}b_m^k(\pd_E^{n-k}\pd_F^k)Z = c_m,\quad m=0,\dots,n
\label{eq:mixed_to_partial_Z}
\end{equation}
for the $n+1$ unknowns $(\pd_E^{n-k}\pd_F^k)Z$.
\item Since $a_m$ and $b_m$ only depend $n$, the inverse matrix for the system \eqref{eq:mixed_to_partial_Z} can be precomputed for any $n$.
\end{enumerate}

\section*{Acknowledgments}
The authors would like to thank Alex Barnett, Shravan Veerapaneni and Min Hyung Cho for several useful conversations.

%----------------------------------------------------------------------------------------------------------------------------------------------------------------
\bibliography{bobi_quadr}

\begin{thebibliography}{10}

\bibitem{aguilar2005high}
JC~Aguilar and Yu~Chen.
\newblock High-order corrected trapezoidal quadrature rules for the {C}oulomb
  potential in three dimensions.
\newblock {\em Computers \& Mathematics with Applications}, 49(4):625--631,
  2005.

\bibitem{alpert1999hybrid}
Bradley~K Alpert.
\newblock Hybrid {G}auss-trapezoidal quadrature rules.
\newblock {\em SIAM Journal on Scientific Computing}, 20(5):1551--1584, 1999.

\bibitem{bonnet2020shape}
Marc Bonnet, Ruowen Liu, and Shravan Veerapaneni.
\newblock Shape optimization of stokesian peristaltic pumps using boundary
  integral methods.
\newblock {\em Advances in Computational Mathematics}, 46(2):1--24, 2020.

\bibitem{borwein2014lattice}
David Borwein, Jonathan~M Borwein, and Armin Straub.
\newblock On lattice sums and wigner limits.
\newblock {\em Journal of Mathematical Analysis and Applications},
  414(2):489--513, 2014.

\bibitem{borwein2013lattice}
Jonathan~M Borwein, ML~Glasser, RC~McPhedran, JG~Wan, and IJ~Zucker.
\newblock {\em Lattice sums then and now}.
\newblock Number 150 in Encyclopedia of Mathematics and its Applications.
  Cambridge University Press, 2013.

\bibitem{bremer2015high}
James Bremer, Adrianna Gillman, and Per-Gunnar Martinsson.
\newblock A high-order accurate accelerated direct solver for acoustic
  scattering from surfaces.
\newblock {\em BIT Numerical Mathematics}, 55(2):367--397, 2015.

\bibitem{bruno2001fast}
Oscar~P. Bruno and Leonid~A. Kunyansky.
\newblock A fast, high-order algorithm for the solution of surface scattering
  problems: Basic implementation, tests, and applications.
\newblock {\em Journal of Computational Physics}, 169(1):80--110, may 2001.

\bibitem{duan2009high}
Ran Duan and Vladimir Rokhlin.
\newblock High-order quadratures for the solution of scattering problems in two
  dimensions.
\newblock {\em Journal of Computational Physics}, 228(6):2152--2174, 2009.

\bibitem{epstein1903theorie}
Paul Epstein.
\newblock Zur theorie allgemeiner zetafunctionen.
\newblock {\em Mathematische Annalen}, 56(4):615--644, 1903.

\bibitem{epstein1906theorie}
Paul Epstein.
\newblock Zur theorie allgemeiner zetafunktionen. ii.
\newblock {\em Mathematische Annalen}, 63(2):205--216, 1906.

\bibitem{gopal2022accelerated}
Abinand Gopal and Per-Gunnar Martinsson.
\newblock An accelerated, high-order accurate direct solver for the
  lippmann--schwinger equation for acoustic scattering in the plane.
\newblock {\em Advances in Computational Mathematics}, 48(4):1--31, 2022.

\bibitem{greengard2009fast}
Leslie Greengard, Denis Gueyffier, Per-Gunnar Martinsson, and Vladimir Rokhlin.
\newblock Fast direct solvers for integral equations in complex
  three-dimensional domains.
\newblock {\em Acta Numerica}, 18:243--275, 2009.

\bibitem{greengard2021_fmm_local}
Leslie Greengard, Michael O'Neil, Manas Rachh, and Felipe Vico.
\newblock Fast multipole methods for the evaluation of layer potentials with
  locally-corrected quadratures.
\newblock {\em Journal of Computational Physics: X}, 10:100092, 2021.

\bibitem{hsiao2008boundary}
George~C Hsiao and Wolfgang~L Wendland.
\newblock {\em Boundary integral equations}.
\newblock Springer, 2008.

\bibitem{johnson2002curious}
Warren~P Johnson.
\newblock The curious history of {Fa\`a di Bruno's} formula.
\newblock {\em The American mathematical monthly}, 109(3):217--234, 2002.

\bibitem{kapur1997high}
Sharad Kapur and Vladimir Rokhlin.
\newblock High-order corrected trapezoidal quadrature rules for singular
  functions.
\newblock {\em SIAM Journal on Numerical Analysis}, 34(4):1331--1356, 1997.

\bibitem{kress2014linear}
Rainer Kress.
\newblock {\em Linear Integral Equations}, volume~82 of {\em Applied
  Mathematical Sciences}.
\newblock Springer-Verlag New York, 3 edition, 2014.

\bibitem{marin2014corrected}
Oana Marin, Olof Runborg, and Anna-Karin Tornberg.
\newblock Corrected trapezoidal rules for a class of singular functions.
\newblock {\em IMA Journal of Numerical Analysis}, 34(4):1509--1540, 2014.

\bibitem{navot1961extension}
Israel Navot.
\newblock An extension of the {E}uler-{M}aclaurin summation formula to
  functions with a branch singularity.
\newblock {\em Journal of Mathematics and Physics}, 40(1-4):271--276, 1961.

\bibitem{navot1962further}
Israel Navot.
\newblock A further extension of the {E}uler-{M}aclaurin summation formula.
\newblock {\em Journal of Mathematics and Physics}, 41(1-4):155--163, 1962.

\bibitem{nitsche2021evaluation}
Monika Nitsche.
\newblock Evaluation of near-singular integrals with application to vortex
  sheet flow.
\newblock {\em Theoretical and Computational Fluid Dynamics}, 35(5):581--608,
  2021.

\bibitem{sidi2012algebraic}
Avram Sidi.
\newblock {E}uler--{M}aclaurin expansions for integrals with arbitrary
  algebraic endpoint singularities.
\newblock {\em Mathematics of Computation}, 81(280):2159--2173, 2012.

\bibitem{sidi2012algebraic_log}
Avram Sidi.
\newblock {E}uler--{M}aclaurin expansions for integrals with arbitrary
  algebraic-logarithmic endpoint singularities.
\newblock {\em Constructive Approximation}, 36(3):331--352, 2012.

\bibitem{sidi2013compact}
Avram Sidi.
\newblock Compact numerical quadrature formulas for hypersingular integrals and
  integral equations.
\newblock {\em Journal of Scientific Computing}, 54(1):145--176, 2013.

\bibitem{sidi2014richardson}
Avram Sidi.
\newblock Richardson extrapolation on some recent numerical quadrature formulas
  for singular and hypersingular integrals and its study of stability.
\newblock {\em Journal of Scientific Computing}, 60(1):141--159, 2014.

\bibitem{sidi2018recent}
Avram Sidi.
\newblock Recent developments in asymptotic expansions from numerical analysis
  and approximation theory.
\newblock {\em Advances in Quantum Chemistry}, 76:35--57, 2018.

\bibitem{wigner1934}
Eugene Wigner.
\newblock On the interaction of electrons in metals.
\newblock {\em Physical Review}, 46(11):1002, 1934.

\bibitem{wu2021corrected}
Bowei Wu and Per-Gunnar Martinsson.
\newblock Corrected trapezoidal rules for boundary integral equations in three
  dimensions.
\newblock {\em Numerische Mathematik}, 149(4):1025--1071, 2021.

\bibitem{wu2021zeta}
Bowei Wu and Per-Gunnar Martinsson.
\newblock Zeta correction: a new approach to constructing corrected trapezoidal
  quadrature rules for singular integral operators.
\newblock {\em Advances in Computational Mathematics}, 47(3):1--21, 2021.

\bibitem{wu2020solution}
Bowei Wu, Hai Zhu, Alex Barnett, and Shravan Veerapaneni.
\newblock Solution of stokes flow in complex nonsmooth 2d geometries via a
  linear-scaling high-order adaptive integral equation scheme.
\newblock {\em Journal of Computational Physics}, 410:109361, 2020.

\bibitem{zhang2021fast}
Yabin Zhang and Adrianna Gillman.
\newblock A fast direct solver for two dimensional quasi-periodic multilayered
  media scattering problems.
\newblock {\em BIT Numerical Mathematics}, 61(1):141--171, 2021.

\end{thebibliography}
\bibliographystyle{plain}

\end{document}